 \documentclass[11pt,a4paper,reqno]{amsart}
 \usepackage[T1]{fontenc}
 \usepackage{lmodern}
 \usepackage[margin=27mm,headheight=14pt]{geometry}
 \usepackage{amssymb,mathtools,mathrsfs,booktabs,array,microtype}
 \usepackage{xurl}
 \usepackage[hidelinks]{hyperref}
 \usepackage{bookmark}
 \usepackage{needspace}
 \hypersetup{pdftitle={Almost global existence for quadratic Schrodinger equations in two dimensions},pdfauthor={Xuan Liu},pdfsubject={Quadratic two-dimensional NLS, space-time resonance, endpoint Besov estimates}}
 \numberwithin{equation}{section}
 \newtheorem{theorem}{Theorem}[section]
 \newtheorem{proposition}[theorem]{Proposition}
 \newtheorem{lemma}[theorem]{Lemma}
 
 \theoremstyle{definition}
 
 \theoremstyle{remark}
 \newtheorem{remark}[theorem]{Remark}
 \newcommand{\R}{\mathbb R}
 
 \newcommand{\Z}{\mathcal Z}
 
 \newcommand{\e}{\mathrm e}
 \newcommand{\dd}{\,\mathrm d}
 \newcommand{\supp}{\operatorname{supp}}
 \newcommand{\norm}[1]{\lVert #1\rVert}
 \newcommand{\Rea}{\operatorname{Re}}
 \newcommand{\one}{\mathbf 1}
 \allowdisplaybreaks[2]
 \title[Quadratic Schr\"odinger equations in two dimensions]{Almost global existence for quadratic Schr\"odinger equations in two dimensions}
 \author{Xuan Liu}
 \subjclass[2020]{35Q55, 35B40}
 \keywords{Quadratic Schr\"odinger equation, almost global existence, space-time resonance, normal form, Besov space}
 
\begin{document}

 	\maketitle
 	
 	\begin{abstract}
 		We study the two-dimensional quadratic Schr\"odinger equation
 		$(i\partial_t+\Delta)u=\lambda_1u^2+\lambda_2\overline u^{\,2}$
 		with initial data of size $\varepsilon$
 		in a weighted $H^2$ space. We prove almost global
 		existence, with a lifespan of at least $\exp(c/\varepsilon)$
 		and decay at the optimal linear dispersive rate $t^{-1}$.
 		Our proof is based on the space--time resonance method
 		introduced by Germain, Masmoudi, and Shatah.
 		The main new ingredient is a Besov-type profile norm
 		$\mathcal Z$, which allows us to close the bootstrap argument
 		without controlling the global second spatial moment
 		of the free profile, thereby avoiding the substantial
 		time-dependent losses arising from direct
 		second-moment estimates.
 	\end{abstract}
 	
 	\section{Introduction and main result}\label{sec:intro}
 	
 	We study the Cauchy problem for the two-dimensional nonlinear Schr\"odinger equation (NLS) with two non-gauge-invariant quadratic terms:
 	\begin{equation}\label{eq:pde}
 		\begin{cases}
 			(i\partial_t+\Delta)u=\lambda_1 u^2+\lambda_2\overline u^{\,2},
 			&(t,x)\in[1,\infty)\times\mathbb R^2,\\
 			u(1,x)=u_1(x),
 		\end{cases}
 	\end{equation}
 	where $\lambda_1,\lambda_2\in\mathbb C$ are fixed constants. 
 	We denote the free profile by $f(t)=e^{-it\Delta}u(t)$
 	and take $t=1$ as the initial time for convenience.

 	Equation \eqref{eq:pde} lies at a critical threshold in the small-data theory of NLS. For the  model $(i\partial_t+\Delta)u=\mathcal N_p(u)$ with homogeneity $p>1$, two exponents help describe the small-data dynamics in dimension $d$:
 	\begin{itemize}
 	\item The \emph{Strauss exponent}
 	$p_{\mathrm{St}}(d)
 	=\frac{d+2+\sqrt{d^2+12d+4}}{2d}$,
 	above which small-data global existence
 	and linear scattering can be established
 	by using Strichartz estimates and  contraction mapping argument
 	\cite{Strauss81}.
 
 		\item The \emph{short-range exponent}
 		$p_{\mathrm{sr}}(d)=1+2/d$.
 		If a solution decays at the linear dispersive rate
 		$\|u(t)\|_\infty\lesssim t^{-d/2}$, then
 		the effective potential $|u|^{p-1}$ is integrable
 		in time precisely when $p>p_{\mathrm{sr}}(d)$.
 		For the gauge-invariant nonlinearity
 		$|u|^{p-1}u$, small-data global existence
 		and linear scattering have been established
 		in this range for $d=1,2,3$, under suitable
 		assumptions on the initial data
 		\cite{CazenaveWeissler1992,GinibreOzawaVelo1994,
 			NakanishiOzawa2002}.
 		
 	\end{itemize}
In two dimensions, $p_{\mathrm{sr}}(2)=2$ while
$p_{\mathrm{St}}(2)=1+\sqrt{2}$. Thus quadratic
nonlinearities lie at the short-range threshold
and strictly below the Strauss exponent.
At this power, the algebraic and resonant
structure of the nonlinearity plays an important
role in the long-time behavior of solutions.

For the gauge-invariant nonlinearity
$\lambda|u|u$, $\lambda\in\mathbb R\setminus\{0\}$,
the $L^2$ norm is conserved. Since the nonlinearity
is $L^2$-subcritical, global well-posedness in
$L^2$ follows from the standard contraction argument
and mass conservation.
However, ordinary scattering to nontrivial
free states fails \cite{Barab1984,Strauss1974}.
Instead, modified scattering with a logarithmic
phase correction and the corresponding modified
wave operators have been studied extensively;
see, e.g.,
\cite{CazenaveNaumkin2018,HayashiNaumkin1998,
	HNST,LiuZhang2023,Shan2026}.

The remaining  quadratic powers
$u^2$, $u\overline u$, and $\overline u^2$
are not gauge invariant and exhibit different
space--time resonance structures.
For the interactions $u^2$ and $\overline u^2$,
the corresponding quadratic phases are
\[
\Phi_+(\xi,\eta)=2(\xi-\eta)\cdot\eta,
\qquad
\Phi_-(\xi,\eta)
=|\xi|^2+|\xi-\eta|^2+|\eta|^2.
\]
Their space--time resonant sets
$\{\Phi_\alpha=0,\ \nabla_\eta\Phi_\alpha=0\}$
both reduce to the origin $(\xi,\eta)=(0,0)$. 
Germain--Masmoudi--Shatah introduced the
space--time resonance method in their study
of three-dimensional quadratic Schr\"odinger
equations \cite{GMS3D}. They subsequently
applied this method to the two-dimensional
case \cite{GMS2D}, establishing small-data
global existence and scattering for quadratic
nonlinearities under the assumption that
the nonlinearities carry a first-order
derivative at low frequencies.
Wave operators for $u^2$ and $\overline u^2$
have also been constructed
\cite{MTT,ShimomuraTonegawa}.

 The mixed interaction
 $u\overline u=|u|^2$ has a substantially
 different resonance structure. Its phase is
 \[
 \Phi_{+-}(\xi,\eta)
 =|\xi|^2-|\xi-\eta|^2+|\eta|^2
 =2\xi\cdot\eta,
 \]
 and its space--time resonant set is
 \[
 \mathcal R_{+-}
 =\{(\xi,\eta)\in\mathbb R^2\times\mathbb R^2:
 \xi=0\}.
 \]
 Unlike the previous cases, this set is
 two-dimensional, making the resonant
 interactions considerably more difficult
 to control. For this nonlinearity,
 Shimomura \cite{Shimomura} proved the
 nonexistence of nontrivial asymptotically
 free solutions under suitable assumptions.
 By imposing an additional derivative structure
 on the quadratic nonlinearity, Zheng
 \cite{Zheng2014} established small-data
 global existence and scattering.

Several other works have established global existence
and scattering for two-dimensional quadratic
Schr\"odinger equations under additional derivative
or regularity assumptions; see, e.g.,
\cite{BernalHayashiNaumkin,Cohn,Delort,HayashiKato,
	HayashiNaumkin2001,HayashiNaumkin2002,
	HayashiNaumkin2012}.

In this paper, we consider the quadratic nonlinearities
$u^2$ and $\overline u^{\,2}$ without imposing
any derivative structure. We prove almost global
existence for small initial data in a weighted
Sobolev space, together with the optimal
$t^{-1}$ dispersive decay.

 	\subsection{Main result}
 	
 	We first introduce the spaces used in the statement
 	of our main theorem. Let
 	$\Sigma^2=\{g\in H^2(\mathbb R^2):|x|^2g\in L^2\}$
 	be equipped with the norm
 	$\|g\|_{\Sigma^2}=\|g\|_{H^2}+\||x|^2g\|_2$.
 	
 	We also use an endpoint profile space defined
 	through a smooth spatial annular partition
 	$\{\rho_j\}_{j\ge0}$, specified in
 	Subsection~\ref{subsec:profile-space}. Set
 	\begin{equation}\label{eq:Z-def}
 		\|g\|_{\mathcal Z} := \sum_{j\ge0} 2^j \|\rho_j g\|_2,
 		\qquad \mathcal Z := \{g \in L^2_{\mathrm{loc}}(\mathbb R^2) : \|g\|_{\mathcal Z} < \infty\}.
 	\end{equation}
 	This norm is equivalent to
 	$\|\widehat g\|_{B^1_{2,1}(\mathbb R^2_\xi)}$
 	and satisfies
 	\begin{equation}\label{eq:basic-intro}
 		\|g\|_1 + \|g\|_2 + \|xg\|_2 \lesssim \|g\|_{\mathcal Z},
 		\qquad
 		\|\widehat g\|_\infty + |t| \|e^{it\Delta}g\|_\infty \lesssim \|g\|_{\mathcal Z}.
 	\end{equation}
 	
 	We can now state our main result.
 	
 	\begin{theorem}\label{thm:main}
 		Fix $\lambda_1, \lambda_2 \in \mathbb{C}$. There exist constants $c, C, \varepsilon_0 > 0$ such that for any initial data $u_1 \in \Sigma^2$ satisfying $0 < \varepsilon := \|u_1\|_{\Sigma^2} \le \varepsilon_0$, the Cauchy problem \eqref{eq:pde} has a unique solution
 		\[
 		u \in C([1,T_\varepsilon];\Sigma^2)\cap C^1([1,T_\varepsilon];L^2), \qquad T_\varepsilon = \exp(c/\varepsilon),
 		\]
 		and the solution satisfies the uniform bound
 		\begin{equation}\label{eq:main}
 			\sup_{1\le t\le T_\varepsilon} \left( \|u(t)\|_{H^2} + \|f(t)\|_{\mathcal Z} + t\|u(t)\|_\infty + \|\widehat f(t)\|_\infty \right) \le C\varepsilon.
 		\end{equation}
 	\end{theorem}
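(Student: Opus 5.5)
The proof is a bootstrap on $[1,T]$ with $T\le T_\varepsilon$, run on the norm
\[
X(T):=\sup_{1\le t\le T}\bigl(\|u(t)\|_{H^2}+\|f(t)\|_{\mathcal Z}\bigr).
\]
By \eqref{eq:basic-intro}, this norm already controls $t\|u(t)\|_\infty$ and $\|\widehat f(t)\|_\infty$. Local well-posedness in $\Sigma^2$ is standard by contraction in $C([1,T];\Sigma^2)$, using the algebra property of $H^2$ and the identity $xe^{it\Delta}=e^{it\Delta}(x+2it\nabla)$. The data satisfy $\|f(1)\|_{\mathcal Z}\lesssim\|u_1\|_{\Sigma^2}$, because $\sum_j 2^j\|\rho_jg\|_2\lesssim\|g\|_2+\||x|^2g\|_2$ by Cauchy--Schwarz in $j$. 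It therefore suffices to prove the a priori estimate
\[
X(T)\le C_0\varepsilon+C_1(\log T)\,X(T)^2 .
\]
With $T\le\exp(c/\varepsilon)$ and $c$ small, this closes at $X\le 2C_0\varepsilon$ by continuity. The $\log T$ loss is exactly what produces the lifespan $\exp(c/\varepsilon)$.

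\emph{Energy part.} Here I would estimate
\[
\frac{d}{dt}\|u\|_{H^2}^2\lesssim\|u\|_\infty\|u\|_{H^2}^2\lesssim t^{-1}X^3 .
\]
Integrating gives $\|u(t)\|_{H^2}^2\le\varepsilon^2+CX^3\log T$, which is of the required form.

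\emph{Profile part in $\mathcal Z$.} On the Fourier side, Duhamel reads
\[
\widehat f(t,\xi)=\widehat f(1,\xi)-i\sum_{\pm}\lambda_\pm\int_1^t\!\!\int e^{is\Phi_\pm(\xi,\eta)}\,\widehat{f_\pm}(s,\xi-\eta)\,\widehat{f_\pm}(s,\eta)\,d\eta\,ds,
\]
where $f_+=f$ and $f_-=\bar f$. Since $\|\cdot\|_{\mathcal Z}\simeq\|\widehat{\cdot}\|_{B^1_{2,1}}$, I need to bound one $\xi$-derivative of the Duhamel term, measured in $L^2$ dyadically in $x$, with the annular sum. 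I would localize the time integral dyadically, $s\sim 2^m$, and show that each dyadic piece has $\mathcal Z$ norm $\lesssim X^2$ uniformly in $m$. Summing over $m\lesssim\log T$ then yields the $\log T$ factor.

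For a dyadic piece, split into two cases.
\begin{enumerate}
\item When $\partial_\xi$ falls on a profile, one gets $\|xf\|_2\le\|f\|_{\mathcal Z}$. Pairing this with $\|e^{is\Delta}f\|_\infty\lesssim s^{-1}X$ gives $\lesssim X^2 s^{-1}$ per unit time, hence $O(X^2)$ per dyadic block.
\item When $\partial_\xi$ falls on the phase, it produces the factor $s\nabla_\xi\Phi$, which costs a full power of $s$. For $\Phi_-$, the phase is non-resonant in time ($\Phi_-\gtrsim|\xi|^2+|\eta|^2$), so I integrate by parts in $s$ (normal form). For $\Phi_+$, I use the space-resonance structure $\nabla_\eta\Phi_+=2(\xi-2\eta)$ together with the time resonance away from the origin. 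Following GMS, I would decompose frequencies with a cutoff adapted to $\{|\eta|,|\xi-\eta|\ll|\xi|\}$, integrating by parts in $\eta$ where $\nabla_\eta\Phi\ne0$ and in $s$ where $\Phi\ne0$.
\end{enumerate}

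\emph{Main obstacle.} The low-frequency region near $(\xi,\eta)=(0,0)$ is the hard part. There the gain from integration by parts is only of size $|\xi|^{-2}$ or $|\eta|^{-1}$. In the GMS $2$D argument this is compensated by derivative nonlinearities, or alternatively by a second moment $\|x^2f\|_2$, which grows in time. My plan is to avoid $x^2f$ entirely and exploit the $\ell^1$ Besov structure. At output frequency $|\xi|\sim2^{-k}$, I would use $\|\widehat f\|_\infty\le X$ and the $L^\infty$ bound on $\widehat f$ to estimate the low-frequency volume directly. Balancing the cutoff $|\xi|\sim s^{-1/2}$ should make each spatial annulus $j\approx m/2$ contribute $O(X^2)$, with off-diagonal $j$ decaying geometrically.

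The boundary terms produced by the normal form are bounded at fixed time by bilinear estimates in $\mathcal Z$ using the $t^{-1}$ decay. Only the diagonal balance $|x|\sim s^{1/2}$ produces a non-summable sum over $m$, and this is the source of the unavoidable $\log T$. I expect this endpoint low-frequency/space-resonant estimate to be the key lemma and the main difficulty. Uniqueness and continuity follow from the local theory.
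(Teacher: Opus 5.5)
Your outer frame is sound: a bootstrap on $\|f\|_{\mathcal Z}$ with a $\log T$ loss, the $H^2$ energy bound via $\|u\|_\infty\lesssim t^{-1}\|f\|_{\mathcal Z}$, and the data bound. The gap is that the two estimates that actually carry the proof are missing. Moreover, the reduction you propose for the first one cannot work as stated.

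\textbf{First gap: reaching the $\mathcal Z$ norm.} Bounding ``one $\xi$-derivative of the Duhamel term'' in $L^2$ controls only $\|xw\|_2$. But $\|w\|_2+\|xw\|_2\approx(\sum_j2^{2j}\|\rho_jw\|_2^2)^{1/2}$ is the $\ell^2$ sum over annuli, whereas $\|w\|_{\mathcal Z}$ is the $\ell^1$ sum. The difference is exactly the logarithmic failure of $\|g\|_1\lesssim\|g\|_2+\|xg\|_2$ in two dimensions, which is what $t\|u\|_\infty\lesssim\|f\|_{\mathcal Z}$ needs. So your Case~1 (pairing $\|xf\|_2$ with $\|e^{is\Delta}f\|_\infty$ in an $L^2$ bilinear bound) gives $\|xF\|_2\lesssim X^2$ per dyadic time block, not a $\mathcal Z$ bound. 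An $L^2$ bilinear bound says nothing about how the output is distributed over annuli. You need a device that yields $\ell^1$ summability without any global second moment of $f$. In the paper this works as follows:
\begin{enumerate}
\item Decompose both inputs into atoms $g_j=\rho_jg$ and $h_k=\rho_kh$.
\item For each localized output, bound both $\|w\|_2$ and $\||x|^2w\|_2$. This uses two output derivatives; the factors $s\nabla_\xi\Phi$ and $s^2|\nabla_\xi\Phi|^2$ are paid for by joint-frequency localization and the atomic bound $\|P_{\le CN}(x^\beta g_j)\|_2\lesssim2^{j(|\beta|+1)}\min(2^{-j},N)\|g_j\|_2$.
\item Apply Lemma~\ref{lem:moment} at an adapted scale $L$, namely $\max(2^j,\sqrt s)$, $\max(2^j,sN)$, or $sN$.
\end{enumerate}
In the space-nonresonant region you also cannot integrate by parts in $\eta$ onto $\widehat f$, since with two output derivatives this regenerates $|x|^2f$. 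The paper instead expands the atoms in $y,z$ and integrates by parts against the complete phase $2s(\xi-\eta)\cdot\eta-y\cdot(\xi-\eta)-z\cdot\eta$. This phase is nonstationary for joint frequency $N\ge K=128R/s$. The band $s^{-1/2}\lesssim N\lesssim K$ is treated by Coifman--Meyer with symbols normalized by $R$ and $R^2$. Your low-frequency paragraph identifies the right scale $s^{-1/2}$, but it is a heuristic, and you leave the key estimate open. Separately, the region $\{|\eta|,|\xi-\eta|\ll|\xi|\}$ you want to cut out is empty by the triangle inequality; the relevant split is by the size of $|\xi-2\eta|$ relative to the joint frequency.

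\textbf{Second gap: the cubic normal-form terms.} You never say how to bound the cubic terms created by integrating by parts in $s$. These contain $\partial_sf=-ie^{-is\Delta}(\lambda_1u^2+\lambda_2\overline u^{\,2})$. For these non-gauge-invariant products you have no decaying weighted control of $\partial_sf$. Indeed, $(x+2is\nabla)(u^2)=2u\,(x+2is\nabla)u-xu^2$, and $\|xu(s)\|_2$ grows like $s$. So $\partial_sf$ cannot occupy a $\mathcal Z$ slot, while the symbol $\chi_{T,\alpha}/\Phi_\alpha$ has size $N^{-2}$ at joint frequency $N$.

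The missing idea is the one-sided estimate of Lemma~\ref{lem:onesided}: $\|e^{-is\Delta}T_{m_N}(e^{i\alpha s\Delta}g,v)\|_{\mathcal Z}\lesssim s\min(1,N^{-1})\|g\|_{\mathcal Z}\|v\|_\infty$. It holds because the time-dependent cutoff forces $sN^2\ge1/4$. It needs only $\|e^{i\alpha s\Delta}\partial_sf_\alpha\|_\infty\lesssim s^{-2}A(s)^2$. This gives cubic contributions $s^{-1}(1+\log s)A(s)^3$, and hence a $(\log T)^2X_T^3$ term. That term is harmless for $T\le e^{c/\varepsilon}$, but it is absent from your inequality.

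The same bound, summed over $N\gtrsim s^{-1/2}$, is what gives the $(1+\log s)$ estimate for the boundary terms. Your ``bilinear estimates in $\mathcal Z$ using the $t^{-1}$ decay'' must handle exactly this $N^{-2}$ singularity, and also the term where $\partial_s$ hits the cutoff. Without these ingredients, $X\le C_0\varepsilon+C_1(\log T)X^2$ is asserted rather than derived.
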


\begin{remark}\label{rem:gauge}
	The conclusion of Theorem~\ref{thm:main}
	remains valid if an additional gauge-invariant
	quadratic term is included, namely, for
	\[
	(i\partial_t+\Delta)u
	=\lambda_1u^2+\lambda_2\overline u^{\,2}
	+\lambda_3|u|u,
	\qquad \lambda_3\in\mathbb C.
	\]
	The proof is given in Appendix~\ref{app:gauge}.
\end{remark}

Theorem~\ref{thm:main} extends the almost-global
existence result of Murphy--Pusateri \cite{MP}
for the one-dimensional cubic NLS to the
two-dimensional quadratic setting.
They established almost global existence
for small weighted Sobolev data on a time
interval of length at least
$\exp(c\varepsilon^{-2})$, together with the
optimal $t^{-1/2}$ dispersive decay.

 The role of spatial weights in obtaining
 dispersive decay can be seen from the
 standard linear estimate
 \[
 \|e^{it\Delta}g\|_{L^\infty(\mathbb R^d)}
 \lesssim
 t^{-d/2}\|g\|_{L^1(\mathbb R^d)}.
 \]
 In one dimension, the $L^1$ norm on the
 right-hand side can be bounded using
 only the first spatial moment:
 \[
 \|g\|_{L^1(\mathbb R)}
 \lesssim
 \|g\|_{L^2(\mathbb R)}^{1/2}
 \|xg\|_{L^2(\mathbb R)}^{1/2}.
 \]
 Murphy--Pusateri \cite{MP} obtained a more
 refined linear estimate in terms of the
 Fourier profile:
 \[
 t^{1/2}\|u(t)\|_{L^\infty_x}
 \lesssim
 \|\widehat f(t)\|_{L^\infty_\xi}
 +t^{-1/4}\|xf(t)\|_{L^2_x},
 \]
 where $f(t)=e^{-it\Delta}u(t)$ is the free profile.
 Accordingly, they introduced the bootstrap norm
 \[
 \|u(t)\|_{X(t)}
 =\frac12\left(
 \|\widehat f(t)\|_{L^\infty_\xi}
 +t^{-1/4}\|xf(t)\|_{L^2_x}
 \right).
 \]
 This estimate allows the first spatial moment
 to grow like $t^{1/4}$ while preserving the
 optimal $t^{-1/2}$ decay.
 They closed the bootstrap argument using
 the space--time resonance method.
 
   In two dimensions, however, the first spatial moment is no longer sufficient to bound the $L^1$-norm. Controlling $\|g\|_{L^1(\mathbb R^2)}$ via  the usual weighted H\"older inequalities requires the second spatial moment:
 \[
 \|g\|_{L^1(\mathbb R^2)} \lesssim \|g\|_{L^2(\mathbb R^2)}^{1/2} \||x|^2 g\|_{L^2(\mathbb R^2)}^{1/2}.
 \]

 This presents a difficulty in the
 space--time resonance analysis.
 Indeed, estimating $\||x|^2f(t)\|_2$
 requires taking two derivatives of the
 Fourier profile. When these derivatives
 fall on the oscillatory factor
 $e^{is\Phi(\xi,\eta)}$, they produce
 terms involving
 \[
 s\,\partial_\xi^2\Phi
 \qquad\text{and}\qquad
 s^2(\partial_\xi\Phi)^2,
 \]
 which introduce prohibitive time growth and derivative loss for quadratic nonlinearities without structural cancellation.
 
 To circumvent this obstruction, we introduce the Besov-type
 profile norm $\mathcal Z$. By design, the $\mathcal{Z}$-norm satisfies the sharp linear embedding
 \[
 \|e^{it\Delta}g\|_\infty \lesssim t^{-1}\|g\|_{\mathcal Z},
 \]
 thereby replacing the need for global second-moment bounds $\||x|^2 f(t)\|_2$ with the control of $\|f(t)\|_{\mathcal Z}$. We achieve this by decomposing the profile into spatially localized annular pieces and performing localized second-moment estimates on the resulting output. This localized approach allows us to propagate the $\mathcal{Z}$-norm and close the bootstrap argument without
 controlling $\||x|^2f(t)\|_2$ globally.

 	\subsection{Outline of the proof and key estimates}
 	
 	Our proof is inspired by the space--time resonance method
 	developed by Germain--Masmoudi--Shatah
 	\cite{GMS3D,GMS2D}; see also \cite{GermainSTR,MP}.  
 	The main  new ingredients  are a Besov-type
 	profile norm $\mathcal Z$,
 	a one-sided weighted bilinear estimate, and spatially
 	localized estimates for space-nonresonant interactions.
 	We next describe how these ingredients yield the
 	a priori bound for Theorem~\ref{thm:main}.
 	
 	Set
 	\[
 	A(t)=\|f(t)\|_{\mathcal Z},
 	\qquad
 	X_T=\sup_{1\le s\le T}A(s).
 	\]
 	By \eqref{eq:basic-intro},
 	\[
 	\|u(t)\|_\infty\lesssim t^{-1}A(t).
 	\]
 	Thus our main task is to control the profile norm
 	$A(t)$ over an exponentially long time interval.

Following the space--time resonance framework
of \cite{GMS3D,GMS2D}, we examine the quadratic
phases
\[
\Phi_+(\xi,\eta)=2(\xi-\eta)\cdot\eta,
\qquad
\Phi_-(\xi,\eta)
=|\xi|^2+|\xi-\eta|^2+|\eta|^2.
\]
For both interactions, the joint space--time
resonant set consists only of the origin.
We therefore isolate the low-frequency
interactions using a time-dependent cutoff
at joint-frequency scale $s^{-1/2}$.
The remaining interactions are divided into
time- and space-nonresonant regions, where
we integrate by parts in time and frequency,
respectively.

 	We first consider the low-frequency and
 	space-nonresonant quadratic terms.
 	The principal estimate is
 	\[
 	\begin{aligned}
 		&\big\|e^{-is\Delta}T_{\chi_S}
 		(e^{is\Delta}g,e^{is\Delta}h)\big\|_{\mathcal Z}\ +
 		\sum_{\alpha=\pm1}
 		\big\|e^{-is\Delta}T_{\chi_{\mathrm{lo}}}
 		(e^{i\alpha s\Delta}g,
 		e^{i\alpha s\Delta}h)\big\|_{\mathcal Z} \lesssim
 		s^{-1}\|g\|_{\mathcal Z}\|h\|_{\mathcal Z},
 	\end{aligned}
 	\]
 	where the bilinear multiplier $T_m$ is defined in
 	Subsection~\ref{subsec:notation}, and the cutoffs
 	are specified in Subsection~\ref{subsec:partition}.
 	The low-frequency contribution is controlled
 	using the restriction of both input frequencies
 	to a ball of radius $O(s^{-1/2})$.
 	In two dimensions, the area of this ball
 	provides the required factor $s^{-1}$.

 	For the space-nonresonant quadratic term,
 	we decompose the inputs into spatially localized
 	pieces $g_j=\rho_jg$ and $h_k=\rho_kh$.
 	We prove
 	\begin{equation}\label{E}
 		\begin{aligned}
 			&\big\|e^{-is\Delta}T_{\chi_S}
 			(e^{is\Delta}g_j,e^{is\Delta}h_k)
 			\big\|_{\mathcal Z} \lesssim
 			\frac{2^{j+k}}s\|g_j\|_2\|h_k\|_2,
 		\end{aligned}
 	\end{equation}
 	for each pair $(j,k)$. Summing over $j,k$
 	then yields the desired bilinear estimate.

 	To prove \eqref{E}, we split the joint-frequency
 	region into moderate and high frequencies,
 	which are treated by different methods.
 	At moderate frequencies, the relevant multipliers
 	satisfy uniform Coifman--Meyer estimates after normalization by the relevant spatial scales.
 	Together with frequency localization, these
 	estimates provide control of the $L^2$ norm
 	and the second spatial moment of the output.
 	Applying Lemma~\ref{lem:moment} then yields
 	the desired $\mathcal Z$ estimate.

 	For the high-frequency contributions, we use
 	spatial localization to carry out the
 	space-nonresonant integration by parts.
 	Following the method of
 	Germain--Masmoudi--Shatah \cite{GMS3D,GMS2D},
 	we work with the complete oscillatory phase
 	\[
 	\Theta(\xi,\eta;y,z)
 	=2s(\xi-\eta)\cdot\eta
 	-y\cdot(\xi-\eta)-z\cdot\eta,
 	\]
 	where $y$ and $z$ denote the spatial variables
 	of the two inputs.
 	The spatial localization allows us to control
 	the terms involving $y-z$ in
 	$\nabla_\eta\Theta$ and obtain a lower bound
 	for the phase gradient on the relevant
 	high-frequency regions.
 	Repeated integration by parts in $\eta$
 	then yields summable estimates.

 In the time-nonresonant region, we integrate
 by parts in time using Shatah's normal-form
 method \cite{Shatah1985}. This produces
 quadratic boundary terms and cubic remainders,
 with multipliers containing the inverse phase
 $\Phi_\alpha^{-1}$. The singularity of these
 multipliers near the resonant set is handled
 by a one-sided weighted bilinear estimate:
 \[
 \big\|e^{-is\Delta}T_{m_N}
 (e^{i\alpha s\Delta}g,v)\big\|_{\mathcal Z}
 \lesssim
 s\min(1,N^{-1})
 \|g\|_{\mathcal Z}\|v\|_\infty.
 \]
 Here $m_N$ denotes a dyadically localized
 normal-form multiplier, defined precisely
 in Section~\ref{sec:normal}.
 The key point is that the $\mathcal Z$ norm
 is required of only one input, while the
 other is controlled solely in $L^\infty$.

 	The one-sided estimate is particularly useful
 	for the cubic normal-form remainders.
 	Indeed, the profile equation and the dispersive
 	decay give
 	\[
 	\|e^{i\alpha s\Delta}\partial_s f_\alpha\|_\infty
 	\lesssim
 	(|\lambda_1|+|\lambda_2|)
 	s^{-2}A(s)^2,
 	\]
 	where $\alpha=\pm1$, $f_+=f$, and
 	$f_-=\overline f$.
 	Thus, the differentiated profile can be
 	placed in the unweighted input of the
 	bilinear estimate, without requiring
 	control of its spatial moments.
 	This yields a cubic contribution bounded by
 	$s^{-1}(1+\log s)A(s)^3$.

 	Combining the preceding estimates, we obtain
 	\[
 	X_T\le C_0\varepsilon
 	+C_\lambda(1+\log T)X_T^2
 	+C_\lambda(1+\log T)^2X_T^3.
 	\]
 	A standard bootstrap argument then yields
 	the almost-global existence and decay
 	estimates in Theorem~\ref{thm:main}
 	for $T\le\exp(c/\varepsilon)$.

\subsection{Organization of the paper}

Section~\ref{sec:tools} collects notation and preliminary estimates.
Section~\ref{sec:normal} develops the resonance decomposition and
the normal-form identity. In Section~\ref{sec:reduction}, we state
the main analytic lemmas and deduce Theorem~\ref{thm:main} from
them. Section~\ref{sec:weightedNF} proves the one-sided multiplier
and time normal-form estimates, while Section~\ref{sec:spaceproof}
treats the low-frequency and space-nonresonant terms.
Finally, Appendix~\ref{app:gauge} extends the main result
to equations with an additional gauge-invariant
quadratic term.

 	\section{Notation and useful lemmas}\label{sec:tools}
 	
 	In this section, we collect the notation and preliminary
 	estimates used throughout the paper.
 	Subsection~\ref{subsec:notation} introduces the Fourier
 	conventions and the free Schr\"odinger group.
 	Subsection~\ref{subsec:profile-space} describes the
 	annular profile space and its basic properties.
 	Subsection~\ref{subsec:spatial-localization} establishes
 	spatial localization estimates, while
 	Subsection~\ref{subsec:multiplier-bounds} recalls the
 	bilinear multiplier bounds needed later.
 	
 	\subsection{Notation and the free Schr\"odinger group}
 	\label{subsec:notation}

 For $1\le p\le\infty$, we write
 $\|g\|_p=\|g\|_{L^p(\mathbb R^2)}$,
 unless otherwise specified.
 For nonnegative quantities $A$ and $B$, we write
 $A\lesssim B$ if $A\le CB$, where $C$ is independent
 of the time and localization parameters, and
 $A\approx B$ if both $A\lesssim B$ and $B\lesssim A$.
 For a multiindex $\beta$, $|\beta|$ denotes its order.
 Dyadic numbers $N$ range over $2^{\mathbb Z}$,
 while $j,k\ge0$ denote spatial annular indices.
 
 	We use the unitary Fourier transform in two dimensions:
 	\begin{equation*}
 		\widehat g(\xi)=\frac1{2\pi}\int_{\mathbb R^2}
 		\e^{-ix\cdot\xi}g(x)\,dx,
 		\qquad
 		\widehat{\e^{it\Delta}g}(\xi)
 		=\e^{-it|\xi|^2}\widehat g(\xi).
 	\end{equation*}
 	Unless otherwise stated, all Lebesgue norms are
 	taken over $\mathbb R^2$.
 	
 	For a bilinear symbol $m(\xi,\eta)$, we define
 	\begin{equation*}
 		\widehat{T_m(v,w)}(\xi)
 		=\frac1{2\pi}\int_{\mathbb R^2}m(\xi,\eta)
 		\widehat v(\xi-\eta)\widehat w(\eta)\,d\eta.
 	\end{equation*}
 	
 	The free Schr\"odinger group satisfies
 	\begin{equation*}
 		\e^{it\Delta}g(x)=\frac1{4\pi it}
 		\int_{\mathbb R^2}\e^{i|x-y|^2/(4t)}g(y)\,dy,
 		\qquad
 		\|\e^{it\Delta}g\|_2=\|g\|_2,
 		\quad t\ne0.
 	\end{equation*}
 	In particular,
 	\begin{equation}\label{eq:free-dispersion}
 		\|\e^{it\Delta}g\|_\infty
 		\lesssim |t|^{-1}\|g\|_1.
 	\end{equation}
 	
 	\subsection{The annular profile space}
 	\label{subsec:profile-space}
 	
 	Fix a nonnegative, radial, nonincreasing function
 	$\rho\in C_c^\infty(\mathbb R^2)$, equal to $1$
 	for $|x|\le1$ and vanishing for $|x|\ge2$.
 	Define
 	\begin{equation*}
 		\rho_0(x)=\rho(x),\qquad
 		\rho_j(x)=\rho(2^{-j}x)-\rho(2^{1-j}x),
 		\quad j\ge1.
 	\end{equation*}
 	Then $\sum_{j\ge0}\rho_j=1$, the supports have
 	uniformly finite overlap, and
 	\begin{equation}\label{eq:rhosupport}
 		\supp\rho_0\subset\{|x|\le2\},\qquad
 		\supp\rho_j\subset\{2^{j-1}\le|x|\le2^{j+1}\},
 		\quad j\ge1.
 	\end{equation}
 	
 	Recall the norm $\|\cdot\|_{\mathcal Z}$ defined
 	in \eqref{eq:Z-def}. For $g_j=\rho_jg$, the support
 	properties imply
 	\[
 	\norm{g_j}_1\le C2^j\norm{g_j}_2,
 	\qquad
 	\norm{xg_j}_2\le2^{j+1}\norm{g_j}_2.
 	\]
 	Summing over $j\ge0$, we obtain
 	\begin{equation}\label{eq:embedding-proof}
 		\norm g_1+\norm g_2+\norm{xg}_2
 		\lesssim\norm g_{\Z}.
 	\end{equation}
 	Moreover,
 	\[
 	\|\widehat g\|_\infty\le(2\pi)^{-1}\|g\|_1,
 	\qquad
 	|t|\|\e^{it\Delta}g\|_\infty\lesssim\|g\|_1.
 	\]
 	Together with \eqref{eq:embedding-proof}, these
 	estimates give \eqref{eq:basic-intro}.
 	
 	By Plancherel's theorem and the spatial annular
 	decomposition,
 	\[
 	\|\widehat g\|_{B^1_{2,1}(\mathbb R^2_\xi)}
 	\approx\|g\|_{\mathcal Z}.
 	\]
 	
 	\subsection{Spatial localization and low-frequency gains}
 	\label{subsec:spatial-localization}
 	
 	We call $g_j=\rho_jg$ a \emph{spatial atom}.
 	For every multiindex $\beta$, the support properties
 	of $\rho_j$ give
 	\begin{equation*}
 		\norm{x^\beta g_j}_2
 		\lesssim_\beta2^{j|\beta|}\norm{g_j}_2,
 		\qquad
 		\norm{x^\beta g_j}_1
 		\lesssim_\beta2^{j(|\beta|+1)}\norm{g_j}_2.
 	\end{equation*}
 	
 	Let $P_{\le N}$ be a smooth Fourier cutoff obtained
 	by rescaling a fixed symbol, equal to the identity on
 	$|\xi|\le N$ and supported in $|\xi|\le2N$. By Plancherel's theorem and the
 	Fourier $L^\infty$ bound,
 	\begin{align*}
 		\norm{P_{\le CN}(x^\beta g_j)}_2
 		&\lesssim2^{j|\beta|}\norm{g_j}_2,\\
 		\norm{P_{\le CN}(x^\beta g_j)}_2
 		&\lesssim N\norm{\widehat{x^\beta g_j}}_\infty
 		\lesssim N2^{j(|\beta|+1)}\norm{g_j}_2.
 	\end{align*}
 	Combining these bounds yields
 	\begin{equation}\label{eq:atom-low}
 		\norm{P_{\le CN}(x^\beta g_j)}_2
 		\lesssim_\beta
 		2^{j(|\beta|+1)}\min(2^{-j},N)\norm{g_j}_2.
 	\end{equation}
 	The same estimate holds for
 	$\one_{|\xi|\le CN}\partial_\xi^\beta\widehat g_j$
 	in $L^2_\xi$, since
 	$\partial_\xi^\beta\widehat g_j
 	=(-i)^{|\beta|}\widehat{x^\beta g_j}$.
 	
 	By \eqref{eq:free-dispersion}, we also have
 	\begin{equation}\label{eq:atom-disp}
 		\norm{\e^{\pm is\Delta}g_j}_\infty
 		\lesssim s^{-1}2^j\norm{g_j}_2,
 		\qquad s>0.
 	\end{equation}
 	
 	To estimate the $\mathcal Z$ norm of localized
 	outputs, we use the following moment inequality.
 	
 	\begin{lemma}[Recovering the annular norm from two moments]
 		\label{lem:moment}
 		For every $L\ge1$,
 		\begin{equation*}
 			\norm w_{\Z}
 			\lesssim L\norm w_2
 			+L^{-1}\norm{|x|^2w}_2.
 		\end{equation*}
 	\end{lemma}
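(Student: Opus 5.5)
We prove the lemma by splitting the sum defining $\norm w_{\Z}$ at the scale $2^j\approx L$. Let $J\ge0$ be the integer with $2^J\le L<2^{J+1}$ (possible since $L\ge1$). We write
\[
\norm w_{\Z}=\sum_{j\le J}2^j\norm{\rho_jw}_2+\sum_{j>J}2^j\norm{\rho_jw}_2
\]
and treat the two pieces separately.

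For the low annuli $j\le J$ we use only the trivial bound $\norm{\rho_jw}_2\le\norm w_2$, valid since $0\le\rho_j\le1$. This gives
\[
\sum_{j\le J}2^j\norm{\rho_jw}_2\le 2^{J+1}\norm w_2\le 2L\norm w_2 .
\]

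For the high annuli $j>J\ge0$ we have $j\ge1$, so by \eqref{eq:rhosupport} $|x|\ge2^{j-1}$ on $\supp\rho_j$. Hence
\[
\norm{\rho_jw}_2\le 4\cdot2^{-2j}\norm{\rho_j|x|^2w}_2\le 4\cdot2^{-2j}\norm{|x|^2w}_2 .
\]
Summing the geometric series,
\[
\sum_{j>J}2^j\norm{\rho_jw}_2\le4\norm{|x|^2w}_2\sum_{j>J}2^{-j}=4\cdot2^{-J}\norm{|x|^2w}_2\le 8L^{-1}\norm{|x|^2w}_2 .
\]
Adding the two bounds gives the lemma with implicit constant $8$.

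There is no real obstacle. The only care needed is that the high-annulus estimate uses the support condition for $j\ge1$, which is guaranteed by $J\ge0$; this is exactly where the hypothesis $L\ge1$ enters. The finite overlap of the supports of the $\rho_j$ is not even needed, since each annulus is bounded by the full norm individually and the geometric weights supply the summability.
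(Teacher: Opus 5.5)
Your proof is correct and follows the paper's argument. Both split the annular sum at $2^j\approx L$, bound the high annuli using $|x|\gtrsim 2^j$ on $\supp\rho_j$ for $j\ge1$ (guaranteed by $L\ge1$), and sum a geometric series. The only difference is on the low annuli: the paper uses Cauchy--Schwarz together with finite overlap of the $\rho_j$, whereas your trivial bound $\norm{\rho_jw}_2\le\norm w_2$ with $\sum_{j\le J}2^j\le 2L$ gives the same $O(L)$ factor without needing the overlap property.
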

 	
 	\begin{proof}
 		For $2^j\le L$, Cauchy--Schwarz and finite overlap give
 		\[
 		\sum_{2^j\le L}2^j\norm{\rho_jw}_2
 		\le\left(\sum_{2^j\le L}2^{2j}\right)^{1/2}
 		\left(\sum_j\norm{\rho_jw}_2^2\right)^{1/2}
 		\lesssim L\norm w_2.
 		\]
 		For $2^j>L$, one has $j\ge1$ and $|x|\approx2^j$
 		on the support of $\rho_j$. Thus
 		\[
 		\sum_{2^j>L}2^j\norm{\rho_jw}_2
 		\lesssim\sum_{2^j>L}2^{-j}\norm{\rho_j|x|^2w}_2
 		\lesssim L^{-1}\norm{|x|^2w}_2.
 		\]
 		Adding the two estimates proves the lemma.
 	\end{proof}
 	
 	In particular,
 	\begin{equation}\label{eq:Zfrom2}
 		\norm g_{\Z}\lesssim\norm g_2+\norm{|x|^2g}_2.
 	\end{equation}
 	The freedom to choose $L$ for each localized
 	output will be important in the subsequent estimates.
 	
 	\subsection{Two multiplier bounds}
 	\label{subsec:multiplier-bounds}
 	
 	We first recall the Coifman--Meyer estimate
 	\begin{equation}\label{eq:CM}
 		\norm{T_m(v,w)}_2\lesssim\norm v_2\norm w_\infty,
 		\qquad
 		\norm{T_m(v,w)}_2\lesssim\norm v_\infty\norm w_2,
 	\end{equation}
 	provided that, for sufficiently many multiindices $\beta$,
 	\begin{equation*}
 		\sup_{(\xi,\eta)\ne(0,0)}
 		\big(|\xi-\eta|^2+|\eta|^2\big)^{|\beta|/2}
 		|\partial_{\xi,\eta}^\beta m(\xi,\eta)|<\infty.
 	\end{equation*}
 	See \cite[Section~5.1]{GMS2D} and
 	\cite[Theorem~1.1]{MPTT}.
 	
 	We shall also use the following compactly supported
 	multiplier estimate, which follows from a standard
 	inverse-Fourier kernel bound.
 	
 	\begin{lemma}[A compactly supported bilinear multiplier estimate]
 		\label{lem:compact}
 		Let $N>0$ and $M>0$. Suppose that $b_N(\xi,\eta)$
 		is supported in
 		\[
 		|\xi-\eta|+|\eta|\le C_*N
 		\]
 		and satisfies
 		\[
 		|\partial_{\xi,\eta}^{\beta}b_N(\xi,\eta)|
 		\le C_\beta M N^{-|\beta|}
 		\]
 		for all multiindices $|\beta|\le 6$.
 		Then
 		\begin{equation*}
 			\|T_{b_N}(v,w)\|_{L^2}
 			\lesssim M\|v\|_{L^2}\|w\|_{L^\infty}.
 		\end{equation*}
 		The implicit constant is independent of $N$ and $M$.
 	\end{lemma}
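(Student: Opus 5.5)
We represent $T_{b_N}$ by a kernel in physical space and bound that kernel in $L^1$. Write
\[
T_{b_N}(v,w)(x)=\int_{\mathbb R^2\times\mathbb R^2}K_N(x-y,x-z)\,v(y)\,w(z)\,dy\,dz,
\]
where, up to a constant factor, $K_N(a,b)$ is the inverse Fourier transform of $(\zeta_1,\zeta_2)\mapsto b_N(\zeta_1+\zeta_2,\zeta_2)$. This follows from the change of variables $\zeta_1=\xi-\eta$, $\zeta_2=\eta$ in the definition of $T_m$. The linear change of variables has bounded Jacobian and bounded inverse, so the new symbol $\tilde b_N(\zeta_1,\zeta_2)=b_N(\zeta_1+\zeta_2,\zeta_2)$ has the same properties as $b_N$. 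It is supported in $|\zeta_1|+|\zeta_2|\le C_*N$, and it satisfies $|\partial^\beta\tilde b_N|\lesssim_\beta MN^{-|\beta|}$ for $|\beta|\le6$.

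The key step is the kernel bound
\[
\|K_N\|_{L^1(\mathbb R^4)}\lesssim M,
\]
uniformly in $N$. We reduce to $N=1$ by scaling: $K_N(a,b)=N^4K^{(1)}(Na,Nb)$, where $K^{(1)}$ is the kernel of $\tilde b_N(N\cdot,N\cdot)$. This rescaled symbol is supported in a fixed ball of $\mathbb R^4$ and has derivatives up to order $6$ bounded by $CM$. The $L^1$ norm is invariant under this rescaling. Integrating by parts gives
\[
|K^{(1)}(a,b)|\lesssim M\,(1+|a|+|b|)^{-6},
\]
because the support has bounded measure and we may take up to six derivatives. Since $6>4$, the function $(1+|(a,b)|)^{-6}$ is integrable on $\mathbb R^4$, which gives $\|K^{(1)}\|_{L^1}\lesssim M$.

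To conclude, we use $|w(z)|\le\|w\|_\infty$ and obtain
\[
|T_{b_N}(v,w)(x)|\le\|w\|_\infty\int_{\mathbb R^2} k(x-y)\,|v(y)|\,dy,
\qquad
k(a):=\int_{\mathbb R^2}|K_N(a,b)|\,db .
\]
Here $\|k\|_{L^1(\mathbb R^2)}=\|K_N\|_{L^1(\mathbb R^4)}\lesssim M$. Young's inequality $L^1*L^2\to L^2$ then gives
\[
\|T_{b_N}(v,w)\|_2\lesssim M\|v\|_2\|w\|_\infty .
\]

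The only point that needs care is that six derivatives are enough to make the kernel integrable in four dimensions. The rest is bookkeeping: the constants in the change of variables and the derivative bounds for $\tilde b_N$ follow from the chain rule with bounded linear coefficients.
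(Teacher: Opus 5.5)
Your proof is correct and follows the paper's approach. The paper only says the lemma follows from a standard inverse-Fourier kernel bound, and you have supplied exactly those details: the change of variables $(\zeta_1,\zeta_2)=(\xi-\eta,\eta)$, rescaling to $N=1$, six integrations by parts giving a kernel in $L^1(\mathbb R^4)$ of size $O(M)$, and Young's inequality.
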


 	\section{Normal forms and the resonance decomposition}\label{sec:normal}
 	
 	In this section, we derive the profile equation, examine the resonance geometry of the two quadratic interactions, and introduce the frequency decomposition. These yield the normal-form representation and the symbol bounds used later.
 	
 	\subsection{The profile equation and a normal-form identity}\label{subsec:earlyNF}
 	We begin by expressing the nonlinear evolution in terms of the free profile. To treat the two quadratic interactions simultaneously, let
 	\begin{equation*}	f_+=f,\qquad f_-=\overline f,\qquad
 		\lambda_+=\lambda_1,\qquad\lambda_-=\lambda_2.
 	\end{equation*}
 	The conjugate field satisfies $\e^{-is\Delta}f_-=\overline{u(s)}$.
 	We define the associated quadratic phases by
 	\begin{equation*}	\Phi_\alpha(\xi,\eta)
 		=|\xi|^2-\alpha|\xi-\eta|^2-\alpha|\eta|^2,
 		\qquad \alpha=\pm1.
 	\end{equation*}
 	
 	After removing the free evolution, the profile satisfies
 	\begin{equation*}	\partial_s f=-i\sum_{\alpha=\pm1}\lambda_\alpha
 		\e^{-is\Delta}\big(\e^{i\alpha s\Delta}f_\alpha\big)^2,
 	\end{equation*}
 	and hence the Duhamel formula takes the form
 	\begin{equation*}	\widehat f(t,\xi)=\widehat f(1,\xi)
 		-\frac{i}{2\pi}\sum_{\alpha=\pm1}\lambda_\alpha
 		\int_1^t\int_{\mathbb R^2}
 		e^{is\Phi_\alpha(\xi,\eta)}
 		\widehat f_\alpha(s,\xi-\eta)\widehat f_\alpha(s,\eta)
 		\,d\eta\,ds.
 	\end{equation*}
 	
 	Following the classical normal-form method of
 	Shatah \cite{Shatah1985}, we integrate by parts in time where
 	the phase does not vanish. Before specifying the frequency
 	cutoffs, we record the corresponding identity. If
 	$a_\alpha(s,\xi,\eta)$ is smooth and supported away from
 	$\{\Phi_\alpha=0\}$, then
 	\begin{equation}\label{eq:normal-general}
 		\begin{aligned}		&\int_1^t e^{is\Phi_\alpha}a_\alpha
 			\widehat f_\alpha(s,\xi-\eta)\widehat f_\alpha(s,\eta)\,ds\\
 			&=\left[e^{is\Phi_\alpha}\frac{a_\alpha}{i\Phi_\alpha}
 			\widehat f_\alpha(s,\xi-\eta)\widehat f_\alpha(s,\eta)
 			\right]_{s=1}^{s=t}\\
 			&\quad-\int_1^t e^{is\Phi_\alpha}
 			\bigg[\frac{\partial_sa_\alpha}{i\Phi_\alpha}
 			\widehat f_\alpha(s,\xi-\eta)\widehat f_\alpha(s,\eta) +\frac{a_\alpha}{i\Phi_\alpha}
 			\partial_s\big(\widehat f_\alpha(s,\xi-\eta)
 			\widehat f_\alpha(s,\eta)\big)\bigg] \,ds.
 		\end{aligned}
 	\end{equation}
 	Here all symbols are evaluated at $(s,\xi,\eta)$. We next identify where the phase can be inverted and where space nonresonance must be used instead.
 	
 	\subsection{Quadratic resonances}
 	We now examine the resonance geometry of the two quadratic phases. Direct calculation gives
 	\begin{align*}	\Phi_+(\xi,\eta)
 		&=2(\xi-\eta)\cdot\eta
 		=\big(|\xi-\eta|^2+|\eta|^2-|\xi-2\eta|^2\big),
 		\\
 		\Phi_-(\xi,\eta)
 		&=\big(|\xi|^2+|\xi-\eta|^2+|\eta|^2\big)
 		\ge\big(|\xi-\eta|^2+|\eta|^2\big),
 		\\
 		\nabla_\eta\Phi_\alpha(\xi,\eta)&=2\alpha(\xi-2\eta).
 	\end{align*}
 	Thus the space-resonant sets are $\{\xi=2\eta\}$ for both signs.
 	The plus phase vanishes on
 	$\{(\xi-\eta)\cdot\eta=0\}$, whereas the minus phase vanishes only
 	at $(\xi,\eta)=(0,0)$. In either case, the joint space--time resonant
 	set is the single point $(0,0)$. The minus interaction can therefore
 	be treated by a time normal form away from joint low frequencies,
 	while for the plus interaction one must also exploit space
 	nonresonance.
 	
 	This geometry motivates a decomposition into a neighborhood of the joint origin, a time-nonresonant region, and, for the plus interaction, a complementary space-nonresonant region.
 	
 	\subsection{Low-frequency and nonresonant regions}\label{subsec:partition}
 	Following the space--time resonance framework of
 	Germain--Masmoudi--Shatah \cite{GMS2D}, we introduce
 	cutoffs adapted to the resonance geometry. Choose a smooth
 	$0\le\chi\le1$ on $[0,\infty)$, equal to $1$ on $[0,1]$
 	and vanishing on $[4,\infty)$. Define
 	\begin{equation*}	\chi_{\mathrm{lo}}(s,\xi,\eta)
 		=\chi\big(s(|\xi-\eta|^2+|\eta|^2)\big).
 	\end{equation*}
 	Choose a smooth cutoff $0\le\vartheta\le1$ on $[0,\infty)$,
 	equal to $1$ on $[0,1/16]$ and vanishing on $[1/8,\infty)$.
 	We then define the time- and space-nonresonant cutoffs by
 	\begin{equation}\label{eq:partition}
 		\begin{aligned}		\chi_{T,+}&=(1-\chi_{\mathrm{lo}})
 			\vartheta\!\left(\frac{|\xi-2\eta|^2}{|\xi-\eta|^2+|\eta|^2}\right),\\
 			\chi_S&=(1-\chi_{\mathrm{lo}})
 			\left[1-\vartheta\!\left(
 			\frac{|\xi-2\eta|^2}{|\xi-\eta|^2+|\eta|^2}\right)\right],\\
 			\chi_{T,-}&=1-\chi_{\mathrm{lo}}.
 		\end{aligned}
 	\end{equation}
 	The products extend smoothly across the origin, because
 	$1-\chi_{\mathrm{lo}}$ vanishes near it. They satisfy
 	\begin{equation*}	1=\chi_{\mathrm{lo}}+\chi_{T,+}+\chi_S,
 		\qquad1=\chi_{\mathrm{lo}}+\chi_{T,-}.
 	\end{equation*}
 	The corresponding support conditions provide the quantitative bounds
 	\begin{align}	\Phi_+(\xi,\eta)
 		&\ge\frac78(|\xi-\eta|^2+|\eta|^2)
 		&&\text{on }\operatorname{supp}\chi_{T,+},\notag\\
 		\Phi_-(\xi,\eta)
 		&\ge(|\xi-\eta|^2+|\eta|^2)
 		&&\text{for all }\xi,\eta,\label{eq:timegapminus}\\
 		|\xi-2\eta|&\ge\frac14\sqrt{|\xi-\eta|^2+|\eta|^2}
 		&&\text{on }\operatorname{supp}\chi_S.\label{eq:spacegap}
 	\end{align}
 	These bounds justify integration by parts in time on the supports of $\chi_{T,\alpha}$ and in frequency on the support of $\chi_S$. We now apply the partition to the profile equation.
 	
 	\subsection{The working normal-form identity}
 	The preceding partition separates the quadratic terms according to the available oscillatory mechanism. Substituting \eqref{eq:partition} into the profile Duhamel formula and applying \eqref{eq:normal-general} to the time-nonresonant terms, we obtain
 	\begin{equation}\label{eq:fullNF}
 		\begin{aligned}		f(t)={}&f(1)
 			-i\sum_{\alpha=\pm1}\lambda_\alpha
 			\int_1^t \e^{-is\Delta}T_{\chi_{\mathrm{lo}}}
 			\big(\e^{i\alpha s\Delta}f_\alpha,\e^{i\alpha s\Delta}f_\alpha\big)\,ds\\
 			&-i\lambda_1\int_1^t \e^{-is\Delta}T_{\chi_S}
 			\big(\e^{is\Delta}f,\e^{is\Delta}f\big)\,ds\\
 			&-i\sum_{\alpha=\pm1}\lambda_\alpha
 			\left[\e^{-is\Delta}T_{\chi_{T,\alpha}/(i\Phi_\alpha)}
 			\big(\e^{i\alpha s\Delta}f_\alpha,\e^{i\alpha s\Delta}f_\alpha\big)
 			\right]_{s=1}^{s=t}\\
 			&+i\sum_{\alpha=\pm1}\lambda_\alpha\int_1^t
 			\e^{-is\Delta}T_{\partial_s\chi_{T,\alpha}/(i\Phi_\alpha)}
 			\big(\e^{i\alpha s\Delta}f_\alpha,\e^{i\alpha s\Delta}f_\alpha\big)\,ds\\
 			&+i\sum_{\alpha=\pm1}\lambda_\alpha\int_1^t
 			\e^{-is\Delta}T_{\chi_{T,\alpha}/(i\Phi_\alpha)}
 			\big(\e^{i\alpha s\Delta}\partial_sf_\alpha,\e^{i\alpha s\Delta}f_\alpha\big)\,ds\\
 			&+i\sum_{\alpha=\pm1}\lambda_\alpha\int_1^t
 			\e^{-is\Delta}T_{\chi_{T,\alpha}/(i\Phi_\alpha)}
 			\big(\e^{i\alpha s\Delta}f_\alpha,\e^{i\alpha s\Delta}\partial_sf_\alpha\big)\,ds.
 		\end{aligned}
 	\end{equation}
 	The first two integrals represent the low-frequency and
 	space-nonresonant quadratic contributions, respectively.
 	The third term is the normal-form boundary contribution
 	arising from integration by parts in time. The fourth contribution
 	is the cutoff-error integral, produced when the
 	time derivative falls on the explicitly time-dependent
 	symbol. Finally, the last two integrals arise when the
 	time derivative falls on either of the two profile
 	factors.
 	
 	To estimate \eqref{eq:fullNF}, we first establish uniform dyadic bounds for the normal-form symbols.
 	
 	\subsection{Dyadic normal-form symbols and their bounds}
 	To verify the multiplier estimates used later, we localize the time-normal-form divisor to dyadic joint-frequency scales. Choose a nonnegative $\varphi\in C_c^\infty((1/2,2))$ satisfying
 	$\sum_{N\in2^{\mathbb Z}}\varphi(r/N)=1$ for $r>0$. The symbol of
 	each dyadic normal-form piece is
 	\begin{equation}\label{eq:dyadicm}
 		m_{\alpha,s,N}(\xi,\eta)
 		=\frac{\chi_{T,\alpha}(s,\xi,\eta)}{i\Phi_\alpha(\xi,\eta)}
 		\varphi\!\left(\frac{\sqrt{|\xi-\eta|^2+|\eta|^2}}{N}\right).
 	\end{equation}
 	
 	The following lemma gives uniform bounds for the
 	dyadic normal-form symbols and their explicit
 	time derivatives.
 	
 	\begin{lemma}[Normal-form symbol bounds]\label{lem:normal-symbols}
 		Let $s\ge1$, $N\in2^{\mathbb Z}$, and
 		$\alpha\in\{+1,-1\}$. Suppose that
 		$m_{\alpha,s,N}$, defined in \eqref{eq:dyadicm},
 		is not identically zero. Then
 		\begin{equation}\label{eq:NF-frequency-range}
 			N\ge\frac1{2\sqrt s},\qquad sN^2\ge\frac14,
 		\end{equation}
 		and, for every multiindex $\beta$,
 		\begin{align}
 			|\partial_{\xi,\eta}^{\beta}m_{\alpha,s,N}|
 			&\lesssim_\beta N^{-2-|\beta|},
 			\label{eq:normal-symbol}\\
 			\operatorname{supp}\partial_sm_{\alpha,s,N}
 			&\subset\{\sqrt{|\xi-\eta|^2+|\eta|^2}
 			\approx N\approx s^{-1/2}\},
 			\notag\\[-0.3em]
 			|\partial_{\xi,\eta}^{\beta}\partial_sm_{\alpha,s,N}|
 			&\lesssim_\beta s^{-1}N^{-2-|\beta|}.
 			\label{eq:normal-symbol-time}
 		\end{align}
 	\end{lemma}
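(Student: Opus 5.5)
Write $r=\sqrt{|\xi-\eta|^2+|\eta|^2}$. On the support of $m_{\alpha,s,N}$ we have $r\approx N$, and the plan is to verify each factor separately, treating each as a symbol of order $0$ or $-2$ with respect to $r$. We then apply the Leibniz rule at the fixed dyadic scale $r\approx N$.

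\emph{Frequency range.} Since $1-\chi_{\mathrm{lo}}$ vanishes where $s r^2\le1$, nonvanishing of $m_{\alpha,s,N}$ forces a point with $sr^2>1$ and $r<2N$. Hence $4N^2>r^2>1/s$, which gives \eqref{eq:NF-frequency-range}.

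\emph{Symbol bounds \eqref{eq:normal-symbol}.} We estimate the four factors in turn.
\begin{itemize}
\item The dyadic factor $\varphi(r/N)$ satisfies $|\partial^\beta\varphi(r/N)|\lesssim N^{-|\beta|}$. This holds because $r$ is homogeneous of degree one, smooth away from $0$, and $r\approx N$ on the support.
\item For $\chi_{\mathrm{lo}}=\chi(sr^2)$, each derivative of $sr^2$ is bounded by $s r^{2-|\beta|}$. On the support of $\chi'$ we have $sr^2\approx1$, so by Fa\`a di Bruno each derivative of $\chi(sr^2)$ is $\lesssim r^{-|\beta|}\approx N^{-|\beta|}$.
\item For $\alpha=+$, the factor $\vartheta(|\xi-2\eta|^2/r^2)$ is homogeneous of degree zero and smooth away from the origin. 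Its derivatives are therefore $\lesssim r^{-|\beta|}$.
\item For $1/\Phi_\alpha$: the polynomial $\Phi_\alpha$ is quadratic, with $|\partial^\beta\Phi_\alpha|\lesssim r^{2-|\beta|}$ for $|\beta|\le2$ (using $|\xi|\lesssim r$), and it vanishes for $|\beta|>2$. On $\operatorname{supp}\chi_{T,\alpha}$ we have $\Phi_\alpha\gtrsim r^2$, by \eqref{eq:timegapminus} and the $\frac78$ bound. Fa\`a di Bruno applied to $x\mapsto1/x$ then gives $|\partial^\beta\Phi_\alpha^{-1}|\lesssim r^{-2-|\beta|}$.
\end{itemize}
Leibniz's rule combines these into $N^{-2-|\beta|}$.

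The one point requiring care is that $\Phi_+$ may vanish elsewhere, so the lower bound on $\Phi_+$ must hold on the full support of the $\vartheta$-factor. This is exactly what the $\frac78$ inequality provides. It follows from
$$\Phi_+=r^2-|\xi-2\eta|^2\ge r^2\big(1-\tfrac18\big),$$
and so the product is globally smooth: it vanishes wherever $\Phi_+$ could vanish.

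\emph{Time derivative.} Only $\chi_{\mathrm{lo}}$ depends on $s$, and
$$\partial_s m_{\alpha,s,N}=-\chi'(sr^2)\,r^2\cdot\frac{(\text{angular factor})\,\varphi(r/N)}{i\Phi_\alpha}.$$
On this support $sr^2\in[1,4]$, so $r\approx s^{-1/2}$, and combined with $r\approx N$ this yields the stated support inclusion. For the bound, write $r^2\chi'(sr^2)=s^{-1}\cdot(sr^2)\chi'(sr^2)$. The function $(sr^2)\chi'(sr^2)$ obeys the same derivative bounds $N^{-|\beta|}$ as $\chi(sr^2)$ above. Multiplying by the previously bounded factors gives $s^{-1}N^{-2-|\beta|}$, which is \eqref{eq:normal-symbol-time}.

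I expect no serious obstacle. The only bookkeeping is to make sure all derivative bounds are uniform in $s$, which holds because $s$ enters only through the combination $sr^2\approx1$.
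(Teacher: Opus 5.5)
Your proof is correct and follows essentially the same route as the paper. You derive the frequency range from $1-\chi_{\mathrm{lo}}\neq0$ together with $r<2N$. You bound the factors separately (the angular factor and $1/\Phi_\alpha$ on the time-nonresonant support, and the derivatives of $\chi_{\mathrm{lo}}$ using $sr^2\approx1$), combine them by Leibniz, and use the explicit formula $\partial_s\chi_{\mathrm{lo}}=r^2\chi'(sr^2)$ for the time derivative. The only cosmetic difference is that the paper treats $\vartheta(\cdot)/\Phi_+$ directly as a degree $-2$ homogeneous symbol smooth on the unit sphere, whereas you get the same bounds via Fa\`a di Bruno and the lower bound $\Phi_+\ge\frac78 r^2$.
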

 	
 	\begin{proof}
 		A nonzero piece requires $1-\chi_{\mathrm{lo}}\ne0$, hence the
 		joint frequency exceeds $s^{-1/2}$. Its dyadic support lies below
 		$2N$, proving \eqref{eq:NF-frequency-range}. In the plus case,
 		the angular factor in $\chi_{T,+}$ divided by $\Phi_+$ is smooth on the unit joint-frequency sphere after
 		extension by zero near $\{\Phi_+=0\}$, and is homogeneous of degree
 		$-2$. In the minus case the same property holds for $1/\Phi_-$ by
 		\eqref{eq:timegapminus}. Thus differentiation of these homogeneous
 		symbols costs one inverse frequency per derivative.
 		
 		It remains to estimate the derivatives of the cutoff. Every positive-order frequency
 		derivative of $\chi(s(|\xi-\eta|^2+|\eta|^2))$ is supported where
 		the joint frequency is comparable to $s^{-1/2}$. On the support of a
 		nonzero dyadic block, this is $N\approx s^{-1/2}$, and scaling the
 		frequencies by $N$ gives uniform smooth symbol estimates.
 		Finally,
 		\begin{equation*}		\partial_s\chi_{\mathrm{lo}}
 			=(|\xi-\eta|^2+|\eta|^2)
 			\chi'\big(s(|\xi-\eta|^2+|\eta|^2)\big).
 		\end{equation*}
 		The time derivative is supported in
 		$1\le s(|\xi-\eta|^2+|\eta|^2)\le4$ and has size $O(s^{-1})$.
 		The same scaled-frequency calculation applies after further
 		frequency derivatives. Leibniz's rule then gives
 		\eqref{eq:normal-symbol}--\eqref{eq:normal-symbol-time}.
 	\end{proof}

 	\section{A priori estimates and proof of the main theorem}
 	\label{sec:reduction}
 	
 	In this section, we prove Theorem~\ref{thm:main}
 	assuming several analytic estimates established later
 	in the paper. We first collect these estimates,
 	then derive an a priori bound for the free profile,
 	and finally prove the main theorem by a bootstrap argument.
 	
 	For a solution on $[1,T]$, set
 	\begin{equation*}
 		A(t)=\|f(t)\|_{\mathcal Z},\qquad
 		X_T=\sup_{1\le s\le T}A(s).
 	\end{equation*}
 	Since the spatial cutoffs $\rho_j$ are real,
 	$\|f_+(s)\|_{\mathcal Z}=\|f_-(s)\|_{\mathcal Z}=A(s)$.
 	
 	\subsection{The analytic lemmas}
 	
 	We begin with the basic properties of the profile
 	space and the control of the initial profile
 	by the weighted Sobolev norm.
 	
 	\begin{lemma}[Profile norm and initial data]
 		\label{lem:basic}
 		The space $\mathcal Z$ is a separable Banach space,
 		$C_c^\infty$ is dense in it, and \eqref{eq:basic-intro} holds.
 		Moreover,
 		\begin{equation}\label{eq:initial-lemma}
 			\|\e^{-i\Delta}u_1\|_{\mathcal Z}\le C_0\|u_1\|_{\Sigma^2}
 		\end{equation}
 		for an absolute constant $C_0\ge1$.
 	\end{lemma}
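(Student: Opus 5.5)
The Banach-space assertions come first. I would identify $\mathcal Z$ with a closed subspace of a weighted $\ell^1$-sum of $L^2$ spaces. Consider the map
\[
J\colon g\mapsto(\rho_jg)_{j\ge0}\in\ell^1_{2^j}(L^2),
\qquad \|(w_j)\|:=\sum_j2^j\|w_j\|_2 .
\]
By definition $J$ is an isometry. The norm property is clear: if $\|g\|_{\mathcal Z}=0$, then $\rho_jg=0$ for every $j$, and since $\sum_j\rho_j=1$ this forces $g=0$ a.e.

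For completeness, take a Cauchy sequence $g_n$ in $\mathcal Z$. By \eqref{eq:embedding-proof} it is Cauchy in $L^2$, so $g_n\to g$ in $L^2$, and hence $\rho_jg_n\to\rho_jg$ in $L^2$ for each $j$. Apply Fatou's lemma to the sum over $j$ of $2^j\|\rho_j(g_n-g_m)\|_2$, letting $m\to\infty$. This gives $\|g_n-g\|_{\mathcal Z}\le\liminf_m\|g_n-g_m\|_{\mathcal Z}$, which tends to $0$ as $n\to\infty$.

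Separability follows because $J(\mathcal Z)$ is a subspace of the separable space $\ell^1(L^2)$. It also follows directly from density of $C_c^\infty$, once that density is established.

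For density, I would truncate and then mollify. Set $g^{(J)}=\sum_{j\le J}\rho_jg$. Because $\rho_j$ overlaps only $\rho_{j\pm1}$, we have $\rho_k g^{(J)}=\rho_kg$ for $k\le J-1$, while $\rho_kg^{(J)}=0$ for $k\ge J+2$. The two boundary terms are bounded by $\|\rho_kg\|_2$ over nearby indices, so
\[
\|g-g^{(J)}\|_{\mathcal Z}\lesssim\sum_{k\ge J-1}2^k\|\rho_kg\|_2\to0 .
\]
For compactly supported $h$ with support in $\{|x|\le R\}$, estimate \eqref{eq:Zfrom2} gives $\|h\|_{\mathcal Z}\lesssim(1+R^2)\|h\|_2$. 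So mollifying $h$ (the support stays in a fixed ball) converges in $\mathcal Z$. The inequalities \eqref{eq:basic-intro} were already derived in Subsection~\ref{subsec:profile-space}, so nothing remains there.

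For \eqref{eq:initial-lemma}, I would apply \eqref{eq:Zfrom2} to $g=e^{-i\Delta}u_1$. The $L^2$ part is immediate: $\|g\|_2=\|u_1\|_2$. For the second moment, I would use the commutation relation $xe^{-i\Delta}=e^{-i\Delta}(x+2i\nabla)$, which follows on the Fourier side from $i\partial_\xi(e^{i|\xi|^2}\widehat u)=e^{i|\xi|^2}(i\partial_\xi-2\xi)\widehat u$, with the sign depending on convention. Iterating it, $|x|^2e^{-i\Delta}u_1$ becomes $e^{-i\Delta}$ applied to a combination of $|x|^2u_1$, $x\cdot\nabla u_1$, $u_1$ and $\Delta u_1$. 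The first, third and fourth terms are directly bounded by $\|u_1\|_{\Sigma^2}$.

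The only step needing care is the cross term $\|x\cdot\nabla u_1\|_2$. Note that $|x|^2u_1\in L^2$ and $\Delta u_1\in L^2$ do not immediately give $x\nabla u_1\in L^2$. I would handle it by integration by parts with a cutoff $|x|^2\rho(x/R)$. The expression $\int |x|^2|\nabla u_1|^2\rho(x/R)$ equals $-\Rea\int \overline{u_1}\,\nabla\cdot(|x|^2\rho(\cdot/R)\nabla u_1)$. This is bounded by
\[
\||x|^2u_1\|_2\|\Delta u_1\|_2+\Bigl(\int|x|^2\rho(x/R)|\nabla u_1|^2\Bigr)^{1/2}\|u_1\|_2+\|u_1\|_2^2 ,
\]
where the terms from derivatives falling on $\rho(\cdot/R)$ are controlled uniformly in $R$. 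Absorbing the middle term and letting $R\to\infty$ gives $\|x\nabla u_1\|_2\lesssim\|u_1\|_{\Sigma^2}$.

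Collecting these bounds yields $\|e^{-i\Delta}u_1\|_{\mathcal Z}\le C_0\|u_1\|_{\Sigma^2}$. The constant $C_0$ can be enlarged so that $C_0\ge1$.
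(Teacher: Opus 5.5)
Your proposal is correct and follows essentially the paper's route. You quote the embeddings from Subsection~\ref{subsec:profile-space}, get completeness, separability and density from the annular $\ell^1$ structure with truncation and mollification, and prove \eqref{eq:initial-lemma} via \eqref{eq:Zfrom2}, the commutator identity \eqref{eq:group-weights} at time one, and the weighted bound $\||x|\nabla u_1\|_2\lesssim\|u_1\|_{\Sigma^2}$ obtained by integrating by parts against $\Delta u_1$. The only difference is that you justify that last bound with the cutoff $\rho(x/R)$ in the weight, whereas the paper approximates $u_1$ by $C_c^\infty$ functions. Your version works: after taking real parts, $\Rea(\overline{u_1}\nabla u_1)=\tfrac12\nabla|u_1|^2$ lets you integrate by parts once more in the terms where $\nabla$ hits $\rho(\cdot/R)$, so they are $O(\|u_1\|_2^2)$ uniformly in $R$.
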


 	\begin{proof}
 		The embeddings in \eqref{eq:basic-intro} were proved in
 		Subsection~\ref{subsec:profile-space}. Completeness follows
 		from the summable annular norm, while truncation and
 		smoothing give density and separability. It remains to
 		estimate the initial profile.
 		
 		For $g\in C_c^\infty(\mathbb R^2)$,
 		integration by parts gives
 		\begin{equation*}
 			\begin{aligned}
 				\norm{|x|\nabla g}_2^2
 				&=-\Rea\int |x|^2\bar g\,\Delta g\dd x+2\norm g_2^2\\
 				&\le\norm{|x|^2g}_2\norm{\Delta g}_2+2\norm g_2^2.
 			\end{aligned}
 		\end{equation*}
 		By approximation, this estimate extends to $\Sigma^2$.
 		The commutator identity
 		$x_\nu \e^{it\Delta}=\e^{it\Delta}(x_\nu-2it\partial_\nu)$
 		yields
 		\begin{equation}\label{eq:group-weights}
 			|x|^2\e^{-it\Delta}g
 			=\e^{-it\Delta}\big(|x|^2g+4itx\cdot\nabla g+4it g-4t^2\Delta g\big).
 		\end{equation}
 		Consequently,
 		\begin{equation}\label{eq:group-Sigma}
 			\norm{\e^{it\Delta}g}_{\Sigma^2}\lesssim(1+|t|)^2\norm g_{\Sigma^2}.
 		\end{equation}
 		In particular, taking $t=1$ in
 		\eqref{eq:group-weights} and using \eqref{eq:Zfrom2},
 		we obtain
 		\[
 		\norm{\e^{-i\Delta}u_1}_{\Z}
 		\lesssim\norm{\e^{-i\Delta}u_1}_2+\norm{|x|^2\e^{-i\Delta}u_1}_2
 		\lesssim\norm{u_1}_{\Sigma^2}.
 		\]
 		This proves \eqref{eq:initial-lemma}.
 	\end{proof}

 	We next state the estimate for the low-frequency
 	and space-nonresonant quadratic terms in the
 	normal-form identity \eqref{eq:fullNF}.
 	
 	\begin{lemma}[Low frequencies and space nonresonance]
 		\label{lem:space}
 		For $s\ge1$ and $g,h\in\mathcal Z$,
 		\begin{equation}\label{eq:spacelemma}
 			\begin{aligned}
 				&\|\e^{-is\Delta}T_{\chi_S(s)}(\e^{is\Delta}g,\e^{is\Delta}h)\|_{\mathcal Z}\\
 				&\quad+\sum_{\alpha=\pm1}
 				\|\e^{-is\Delta}T_{\chi_{\mathrm{lo}}(s)}
 				(\e^{i\alpha s\Delta}g,\e^{i\alpha s\Delta}h)\|_{\mathcal Z}
 				\lesssim s^{-1}\|g\|_{\mathcal Z}\|h\|_{\mathcal Z}.
 			\end{aligned}
 		\end{equation}
 		The constant is uniform in $s$ and no global second spatial moment is
 		assumed for either input.
 	\end{lemma}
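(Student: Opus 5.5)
The plan is to prove \eqref{eq:spacelemma} atom by atom. Write $g=\sum_j g_j$ and $h=\sum_k h_k$ with $g_j=\rho_jg$ and $h_k=\rho_kh$. For each pair $(j,k)$ and each of the three bilinear terms, I aim for the bound
\[
\|\e^{-is\Delta}T_{m}(\e^{i\alpha s\Delta}g_j,\e^{i\alpha s\Delta}h_k)\|_{\mathcal Z}\lesssim s^{-1}2^{j+k}\|g_j\|_2\|h_k\|_2 .
\]
Summing in $(j,k)$ then gives $s^{-1}\|g\|_{\mathcal Z}\|h\|_{\mathcal Z}$. For a fixed pair, call the output $w$. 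Its Fourier transform is $\widehat w(\xi)=\frac1{2\pi}\int e^{is\Phi_\alpha(\xi,\eta)}m(\xi,\eta)\widehat g_j(\xi-\eta)\widehat h_k(\eta)\,d\eta$, possibly with complex conjugates when $\alpha=-1$. I will estimate $\|w\|_2$ and $\||x|^2w\|_2=\|\Delta_\xi\widehat w\|_2$ separately and then apply Lemma~\ref{lem:moment} with an $L$ chosen for each pair $(j,k)$, scale and time $s$. This is exactly the freedom the remark after \eqref{eq:Zfrom2} refers to.

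\emph{Low frequencies.} Here both $|\xi-\eta|$ and $|\eta|$ are $\lesssim s^{-1/2}$. By Young's inequality on the Fourier side,
\[
\|w\|_2\lesssim\|\one_{|\xi|\lesssim s^{-1/2}}\widehat g_j\|_2\,\|\one_{|\eta|\lesssim s^{-1/2}}\widehat h_k\|_1 .
\]
Then \eqref{eq:atom-low} with $N=s^{-1/2}$, together with $\|\widehat h_k\|_\infty\lesssim2^k\|h_k\|_2$ over a ball of area $s^{-1}$, gives $\|w\|_2\lesssim s^{-3/2}2^{j+k}\|g_j\|_2\|h_k\|_2$. For $\Delta_\xi\widehat w$, I first change variables so that all $\xi$-dependence sits in $e^{is\Phi}$, in $\chi_{\mathrm{lo}}$, and in one input. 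Each derivative then costs one of three things: a factor $s|\nabla_\xi\Phi|\lesssim s^{1/2}$; a factor $s^{1/2}$ from the cutoff; or a factor $x$ on the atom, which is controlled by \eqref{eq:atom-low} as $2^{j}\min(2^{-j},s^{-1/2})\cdot2^j$. In all cases $\||x|^2w\|_2\lesssim s\cdot s^{-3/2}2^{j+k}\|g_j\|_2\|h_k\|_2$ up to harmless terms. Taking $L=s^{1/2}$ in Lemma~\ref{lem:moment} yields $s^{-1}2^{j+k}\|g_j\|_2\|h_k\|_2$.

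\emph{Space-nonresonant region, moderate frequencies.} Decompose $\chi_S$ dyadically in the joint frequency $N\gtrsim s^{-1/2}$. When $sN\lesssim 2^{\max(j,k)}$, I write one input in $L^2$ and the other in $L^\infty$ via \eqref{eq:atom-disp}. The rescaled symbols $\chi_S\varphi(\cdot/N)$ satisfy the hypotheses of Lemma~\ref{lem:compact} uniformly, so $\|w_N\|_2\lesssim s^{-1}2^{k}\|g_j\|_2\|h_k\|_2$. For the second moment, the commutator identity \eqref{eq:group-weights} turns $|x|^2$ on the output into weights $y,z$ on the atoms (costing $2^j$ or $2^k$) and factors $s\nabla_\xi$ acting on the symbol (costing $sN^{-1}\cdot$ lower order, combined with the frequency factor $N$). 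Lemma~\ref{lem:compact} again controls the resulting pieces. I will then sum over the finitely many $N$ with $s^{-1/2}\lesssim N\lesssim 2^{\max(j,k)}s^{-1}$, using the low-frequency gain in \eqref{eq:atom-low} to avoid a logarithm, and choose $L\approx 2^{\max(j,k)}$.

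\emph{Space-nonresonant region, high frequencies.} When $sN\gg2^{\max(j,k)}$, I write the output with the full phase $\Theta(\xi,\eta;y,z)$. On $\supp\chi_S$, \eqref{eq:spacegap} gives
\[
|\nabla_\eta\Theta|=|2s(\xi-2\eta)-(z-y)|\gtrsim sN ,
\]
because $|y|,|z|\lesssim2^{\max(j,k)}\ll sN$. Integrating by parts in $\eta$ many times then gains $(sN)^{-M}$ against symbol derivatives of size $N^{-1}$, so each gain is $(sN^2)^{-1}\lesssim1$ and $N$-summable. The same works after applying $\Delta_\xi$, which produces at most $s^2N^2$ or $2^{2\max(j,k)}$ weights. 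Summing over dyadic $N$ gives the bound.

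I expect this last step to be the main obstacle. The difficulty is bookkeeping the $y$- and $z$-dependence together with the $\xi$-derivatives on an $\eta$-integral where the inputs are only $L^2$ in space. The bound must also be made uniform near the threshold $sN\approx2^{\max(j,k)}$, and this threshold has to be matched with the moderate-frequency case.
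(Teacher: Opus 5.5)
Your architecture matches the paper's: spatial atoms, a low/moderate/high split in the joint frequency, Lemma~\ref{lem:moment} with an adapted $L$, and integration by parts in $\eta$ against the full phase $\Theta$ at high frequency. Your high-frequency step is essentially the paper's argument: put the atoms in $L^1$, bound the kernel uniformly in $y,z$, and take $L=sN$.

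The moderate-frequency step, however, has a genuine gap. Take $j\ge k$ and a dyadic block $w_N$ with $s^{-1/2}\lesssim N\lesssim 2^j/s$. The block estimates you describe give $\|w_N\|_2\lesssim s^{-1}2^k\|g_j\|_2\|h_k\|_2$ and $\||x|^2w_N\|_2\lesssim 2^{2j}s^{-1}2^k\|g_j\|_2\|h_k\|_2$. The term with both weights on $g_j$ does not depend on $N$. So every block costs a full $s^{-1}2^{j+k}\|g_j\|_2\|h_k\|_2$ in $\mathcal Z$.

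The gain you invoke from \eqref{eq:atom-low} is the factor $\min(2^{-j},N)$, and it is not available here. The band is nonempty only when $2^j\gtrsim\sqrt s$, and then $N\ge s^{-1/2}\gtrsim 2^{-j}$, so the minimum is comparable to $2^{-j}$ and there is no decay in $N$. Summing the roughly $\log_2(2^js^{-1/2})$ blocks loses a factor comparable to $j$ for fixed $s$. The resulting $\sum_j j\,2^j\|g_j\|_2$ is not controlled by $\|g\|_{\mathcal Z}$.

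Lemma~\ref{lem:compact} is a single-scale estimate and cannot remove this loss. The paper avoids it by not decomposing the band at all. It applies the Coifman--Meyer bound \eqref{eq:CM} directly to $m=\chi_S\zeta(r/K)$ and to the normalized symbols $R^{-1}b_\nu$ and $R^{-2}c_\nu$, where $b_\nu=\partial_{\xi_\nu}m+2is\eta_\nu m$ and $c_\nu$ is defined analogously. These satisfy uniform bounds $r^{-|\beta|}$ across all scales, because $sr\le 2sK=256R$ and $(Rr)^{-1}\le1$ on the band, with $R=\max(2^j,\sqrt s)$. One then takes $L=R$ and uses $R\min(2^{-j},K)\le128$.

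There is also a smaller, easily fixed error in your low-frequency step: the choice $L=s^{1/2}$ fails when the differentiated atom has $2^j\gg\sqrt s$. The term with both $x$-weights on $g_j$ gives $\||x|^2w\|_2\lesssim s^{-1}2^{2j+k}\|g_j\|_2\|h_k\|_2$. With $L=s^{1/2}$ this exceeds the target $s^{-1}2^{j+k}$ by the factor $2^js^{-1/2}$, so these terms are not harmless. To fix it, keep the factor $\min(2^{-j},s^{-1/2})$ in $\|w\|_2$, which you discarded by bounding it by $s^{-1/2}$. Then take $L=\max(2^j,\sqrt s)$, as the paper does.
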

 	
 	The proof is given in Section~\ref{sec:spaceproof},
 	where spatial localization, bilinear multiplier
 	estimates, and nonstationary phase arguments
 	are combined to obtain the required decay.
 	
 	We also need estimates for the boundary and
 	remainder terms arising from integration by parts
 	in time.  
 	
 	\begin{lemma}[Time normal-form estimates]
 		\label{lem:normal}
 		For either sign $\alpha$, $s\ge1$, and $g,h\in\mathcal Z$,
 		\begin{align}
 			&\|\e^{-is\Delta}T_{\chi_{T,\alpha}(s)/(i\Phi_\alpha)}
 			(\e^{i\alpha s\Delta}g,\e^{i\alpha s\Delta}h)\|_{\mathcal Z}
 			\lesssim (1+\log s)\|g\|_{\mathcal Z}\|h\|_{\mathcal Z},
 			\label{eq:normal-bound}\\
 			&\|\e^{-is\Delta}T_{\partial_s\chi_{T,\alpha}(s)/(i\Phi_\alpha)}
 			(\e^{i\alpha s\Delta}g,\e^{i\alpha s\Delta}h)\|_{\mathcal Z}
 			\lesssim s^{-1}\|g\|_{\mathcal Z}\|h\|_{\mathcal Z}.
 			\label{eq:cutoff-bound}
 		\end{align}
 		More generally, for $g\in\mathcal Z$ and $v\in L^\infty$,
 		\begin{equation}\label{eq:onesided-normal}
 			\|\e^{-is\Delta}T_{\chi_{T,\alpha}(s)/(i\Phi_\alpha)}
 			(\e^{i\alpha s\Delta}g,v)\|_{\mathcal Z}
 			\lesssim s(1+\log s)\|g\|_{\mathcal Z}\|v\|_\infty.
 		\end{equation}
 		The analogous estimate holds after exchanging the two inputs and
 		interchanging the corresponding variables of the multiplier.
 	\end{lemma}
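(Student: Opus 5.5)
The plan is to prove the one-sided dyadic estimate stated in the introduction,
\[
\big\|\e^{-is\Delta}T_{m_{\alpha,s,N}}(\e^{i\alpha s\Delta}g,v)\big\|_{\mathcal Z}\lesssim s\min(1,N^{-1})\|g\|_{\mathcal Z}\|v\|_\infty ,
\]
and then to derive \eqref{eq:normal-bound}, \eqref{eq:cutoff-bound} and \eqref{eq:onesided-normal} by summing over $N$. By \eqref{eq:NF-frequency-range}, only $N\ge(2\sqrt s)^{-1}$ contribute. The sum of $\min(1,N^{-1})$ over such dyadic $N$ is $\lesssim 1+\log s$, which gives \eqref{eq:onesided-normal}.

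For the dyadic estimate, I decompose $g=\sum_j g_j$ with $g_j=\rho_j g$ and estimate each output $w_j=\e^{-is\Delta}T_{m}(\e^{i\alpha s\Delta}g_j,v)$ through Lemma~\ref{lem:moment}, choosing $L$ per piece. The $L^2$ bound is immediate from Lemma~\ref{lem:compact} applied with $M=N^{-2}$ (by \eqref{eq:normal-symbol}): $\|w_j\|_2\lesssim N^{-2}\|g_j\|_2\|v\|_\infty$.

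The second moment is handled with \eqref{eq:group-weights}. I write $|x|^2\e^{-is\Delta}=\e^{-is\Delta}(|x|^2+4isx\cdot\nabla+4is-4s^2\Delta)$, so that on the Fourier side it acts as $\partial_\xi$ derivatives plus $s\xi\partial_\xi$ and $s^2|\xi|^2$ factors. The output frequency satisfies $|\xi|\lesssim N$. Hence the dangerous term $s^2|\xi|^2 m$ is a symbol of size $s^2N^2\cdot N^{-2}=s^2$ with the same regularity, and Lemma~\ref{lem:compact} bounds it by $s^2\|g_j\|_2\|v\|_\infty$. The $\xi$-derivatives must not hit $\widehat v$. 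To arrange this, I use the change of variables placing $v$ at frequency $\eta$ and $g$ at $\xi-\eta$: each $\partial_\xi$ then falls either on the symbol (costing $N^{-1}$) or on $e^{-i\alpha s|\xi-\eta|^2}\widehat g_j(\xi-\eta)$. This last is exactly the Fourier transform of $\e^{i\alpha s\Delta}(x g_j)$ up to lower order, i.e.\ it commutes back to $x$-weights on $g_j$ costing $2^j$. Collecting terms, I expect
\[
\||x|^2w_j\|_2\lesssim \big(N^{-2}(2^{j}+N^{-1})^2+s(2^j+N^{-1})N^{-1}+s^2\big)\|g_j\|_2\|v\|_\infty .
\]
Since $N^{-1}\lesssim s^{1/2}$, this is $\lesssim(2^{2j}N^{-2}+s^2)\|g_j\|_2\|v\|_\infty$, after absorbing the cross terms.

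Lemma~\ref{lem:moment} with $L=\max(1,\,2^j,\,sN)$ then gives $\|w_j\|_{\mathcal Z}\lesssim(2^jN^{-2}+sN^{-1}+ \dots)\|g_j\|_2\|v\|_\infty$. This is the main obstacle: the $2^j N^{-2}$ term is too large when $N\ll1$. It needs refinement, for example by replacing $L^2$ of the $g$-input with the low-frequency gain \eqref{eq:atom-low}: $\|P_{\lesssim N}x^\beta g_j\|_2\lesssim 2^{j(|\beta|+1)}\min(2^{-j},N)\|g_j\|_2$. I would use Lemma~\ref{lem:compact} in the form with $P_{\le CN}$ applied to the $g$-factor (legitimate since the symbol is supported at $|\xi-\eta|\lesssim N$), and pay $\|v\|_\infty$ for the other. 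With this gain the bound becomes $(2^{2j}N^{-1}+s\,2^{j}+s^2N)$-type for the moment and $2^{j}N^{-1}\min(1,2^{-j}N^{-1})$ for $L^2$. Optimizing $L$ and using $sN^2\gtrsim1$ should yield $s\min(1,N^{-1})2^j\|g_j\|_2\|v\|_\infty$; summing in $j$ gives the dyadic estimate. For $N\ge1$ one uses instead the plain $L^2$ bounds and the symbol decay $N^{-2}$.

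The other estimates follow from this. For \eqref{eq:normal-bound}, I place $h$'s decay into $v=\e^{i\alpha s\Delta}h$, using $\|v\|_\infty\lesssim s^{-1}\|h\|_{\mathcal Z}$ from \eqref{eq:basic-intro}; the factor $s$ cancels, leaving $(1+\log s)$. For \eqref{eq:cutoff-bound}, the symbol $\partial_s m$ is supported at the single scale $N\approx s^{-1/2}$ with the extra factor $s^{-1}$ from \eqref{eq:normal-symbol-time}. The same argument at one dyadic block then gives $s^{-1}\cdot s\cdot s^{-1}\|g\|_{\mathcal Z}\|h\|_{\mathcal Z}$ with no logarithm. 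The exchanged-input version is identical by symmetry of the argument under $\eta\mapsto\xi-\eta$.
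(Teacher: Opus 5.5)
Your architecture matches the paper's. The block estimate is Lemma~\ref{lem:onesided}: split $g$ into atoms, bound $\|w_j\|_2$ and $\||x|^2w_j\|_2$ via Lemma~\ref{lem:compact} with $P_{\le CN}$ on the $g$-input and \eqref{eq:atom-low}, then apply Lemma~\ref{lem:moment} with $L=\max(2^j,sN)$. The dyadic sum and your deductions of \eqref{eq:normal-bound} and \eqref{eq:cutoff-bound} are exactly as in the paper. Moving $|x|^2$ through $e^{-is\Delta}$ by \eqref{eq:group-weights}, instead of differentiating $\widehat w$ with the combined phase $e^{is(|\xi|^2-\alpha|\xi-\eta|^2)}$, is only cosmetic. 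The problem is that the block estimate, which is the whole content of the lemma, is left at ``should yield'', and the bookkeeping you do give would not close it.

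First, a $\partial_{\xi_\nu}$ hitting $e^{-i\alpha s|\xi-\eta|^2}\widehat g_j(\xi-\eta)$ is not $\widehat{e^{i\alpha s\Delta}(x_\nu g_j)}$ up to lower order. It also produces $-2i\alpha s(\xi-\eta)_\nu$, of size $sN$, which dominates $2^j$ whenever $2^j<sN$; this is where the scale $sN$ in $R$ comes from. Your final tally is still of the right size only because the omitted terms are comparable to ones you count from $4isx\cdot\nabla$ and $-4s^2\Delta$.

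Second, $2^jN^{-2}$ is not the obstruction, since $N^{-2}\le 4s$ by \eqref{eq:NF-frequency-range}. The real obstruction is $sN^{-1}$ when $2^jN<1$.

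Third, and decisively, you must keep the full factor $2^j\min(2^{-j},N)$ from \eqref{eq:atom-low} in both the $L^2$ and the moment bounds. This gives $\|w_j\|_{\mathcal Z}\lesssim RN^{-2}\min(2^{-j},N)\,2^j\|g_j\|_2\|v\|_\infty$ with $R=\max(2^j,sN)$. The closing split is then $2^j\le sN$, where $R=sN$ and $\min(2^{-j},N)\le\min(1,N)$, versus $2^j>sN$, where $R=2^j$, $\min(2^{-j},N)\le 2^{-j}$, and one uses $N^{-2}\lesssim s\min(1,N^{-1})$.

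Your split by $N<1$ versus $N\ge1$ fails. For $N<1$ you use the moment bound of ``$(2^{2j}N^{-1}+s2^j+s^2N)$-type'', i.e.\ $\min(2^{-j},N)\le N$ throughout, and this breaks down when $N<1$ and $2^j\gg sN$. Take $s=1$, $N=1/2$, $j\to\infty$: the $L^2$ bound is $\approx\|g_j\|_2\|v\|_\infty$, while your moment bound is $\approx 2^{3j}\|g_j\|_2\|v\|_\infty$. No choice of $L$ in Lemma~\ref{lem:moment} then gives better than $\approx 2^{3j/2}$, against the target $\approx 2^j$. Once the split is made by $2^j$ versus $sN$ as above, your proposal coincides with the paper's proof.
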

 	
 	This lemma is proved in Subsection~\ref{sec:NFbounds},
 	using the normal-form symbol bounds from
 	Lemma~\ref{lem:normal-symbols} and the one-sided
 	frequency-localized estimate established in
 	Subsection~\ref{subsec:onesided}.
 	
 	Finally, we recall the standard local existence
 	and continuation properties needed to complete
 	the proof of the main theorem.
 	
 	\begin{lemma}[Local theory and continuation]
 		\label{lem:local}
 		For $u_1\in\Sigma^2$, there exists a unique maximal forward solution
 		$u\in C([1,T_{\max});\Sigma^2)\cap C^1([1,T_{\max});L^2)$,
 		and its profile belongs to $C^1([1,T];\Sigma^2)$ on each compact
 		subinterval of existence. If $T_{\max}<\infty$ while
 		$\sup_{1\le t<T_{\max}}\|u(t)\|_{H^2}<\infty$, the solution extends
 		past $T_{\max}$. Furthermore,
 		\begin{equation}\label{eq:energy-lemma}
 			\|u(t)\|_{H^2}\le\|u_1\|_{H^2}
 			\exp\!\left(C(|\lambda_1|+|\lambda_2|)
 			\int_1^t\|u(s)\|_\infty\,ds\right).
 		\end{equation}
 	\end{lemma}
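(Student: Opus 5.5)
We will prove Lemma~\ref{lem:local} by the standard contraction-mapping argument in $H^2$, propagating the weights by the commutator identity \eqref{eq:group-weights}. Throughout we use that $H^2(\mathbb R^2)$ is an algebra embedded in $L^\infty$.

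\emph{Step 1 (local existence in $H^2$).} On a short interval $[t_0,t_0+\delta]$ we run a fixed-point argument for the Duhamel map
\[
\Psi(u)(t)=\e^{i(t-t_0)\Delta}u(t_0)-i\int_{t_0}^t\e^{i(t-s)\Delta}\big(\lambda_1u^2+\lambda_2\overline u^{\,2}\big)(s)\,ds
\]
in a ball of $C([t_0,t_0+\delta];H^2)$. The algebra property gives $\|u^2\|_{H^2}\lesssim\|u\|_{H^2}^2$, together with the corresponding difference estimate. Hence $\delta$ depends only on a bound for $\|u(t_0)\|_{H^2}$, and this yields uniqueness and the blow-up alternative: if $\sup_{t<T_{\max}}\|u(t)\|_{H^2}<\infty$, restarting from times close to $T_{\max}$ extends the solution. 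Since $\partial_tu=i\Delta u-i(\lambda_1u^2+\lambda_2\overline u^{\,2})\in C(L^2)$, we get $u\in C^1(L^2)$.

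\emph{Step 2 (weights).} To propagate $|x|^2u\in L^2$ we work with the profile $f=\e^{-it\Delta}u$, which satisfies
\[
\partial_tf=-i\,\e^{-it\Delta}\big(\lambda_1u^2+\lambda_2\overline u^{\,2}\big).
\]
Applying \eqref{eq:group-weights} with $g=u^2$ gives
\[
\norm{|x|^2\e^{-it\Delta}(u^2)}_2\lesssim\norm{|x|^2u^2}_2+t\norm{|x|\nabla(u^2)}_2+t\norm{u^2}_2+t^2\norm{\Delta(u^2)}_2 .
\]
Each term on the right is controlled by $(1+t)^2\|u\|_{H^2}\|u\|_{\Sigma^2}$, using $\|u\|_\infty\lesssim\|u\|_{H^2}$, the weighted interpolation bound for $|x|\nabla u$ from the proof of Lemma~\ref{lem:basic}, and $\nabla u\in L^4$. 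The same holds for $\overline u^{\,2}$. Since this bound is linear in $\|u\|_{\Sigma^2}$, Gronwall's inequality keeps $\|f(t)\|_{\Sigma^2}$ finite on compact subintervals. Then \eqref{eq:group-Sigma} shows $u\in C(\Sigma^2)$ and $f\in C^1([1,T];\Sigma^2)$; continuity of the right-hand side of the profile equation in $\Sigma^2$ follows by the same bounds applied to differences.

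\emph{Step 3 (energy estimate \eqref{eq:energy-lemma}).} For smooth solutions (then by approximation) we differentiate $\|u\|_2^2$ and $\|\Delta u\|_2^2$; the linear part drops out by skew-adjointness. This gives
\[
\tfrac{d}{dt}\|u\|_{H^2}^2\lesssim(|\lambda_1|+|\lambda_2|)\,\|u\|_{H^2}\,\|u^2\|_{H^2}.
\]
The obstacle is that the naive bound $\|u^2\|_{H^2}\lesssim\|u\|_{H^2}^2$ gives no $L^\infty$ linear factor. Expanding $\Delta(u^2)=2u\Delta u+2|\nabla u|^2$, the first term is $\le\|u\|_\infty\|u\|_{H^2}$, but $|\nabla u|^2$ gives $\|\nabla u\|_4^2$. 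By Gagliardo--Nirenberg, $\|\nabla u\|_4^2\lesssim\|u\|_\infty\|\Delta u\|_2$, which follows from the identity $\int|\nabla u|^4=-\int u\,\nabla\cdot(|\nabla u|^2\nabla u)$. Thus $\|u^2\|_{H^2}\lesssim\|u\|_\infty\|u\|_{H^2}$, i.e.\ the Kato--Ponce estimate, and likewise for $\overline u^{\,2}$. Therefore
\[
\tfrac{d}{dt}\|u\|_{H^2}\le C(|\lambda_1|+|\lambda_2|)\|u\|_\infty\|u\|_{H^2},
\]
and Gronwall yields \eqref{eq:energy-lemma}. The justification for $H^2$ solutions is via regularization and continuous dependence. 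This Gagliardo--Nirenberg step is the only place requiring care.
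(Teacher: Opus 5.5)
Your proposal is correct and follows the route the paper indicates: the paper only cites the standard semilinear $H^2$ theory, persistence of weights on bounded time intervals, and the product bound $\|u^2\|_{H^2}\lesssim\|u\|_\infty\|u\|_{H^2}$ with Gronwall, and you supply exactly these details, including a Gagliardo--Nirenberg proof of the product bound. One point needs tightening: your Gronwall step in Step~2 is only an a priori estimate, so pair it either with regularized weights or with running the fixed point also in $C([t_0,t_0+\delta];\Sigma^2)$ (legitimate since $\Sigma^2$ is an algebra and $e^{it\Delta}$ is bounded on it for bounded $t$), after which uniqueness in $H^2$ shows that the $\Sigma^2$ lifespan equals the $H^2$ lifespan.
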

 	
 	Local well-posedness and the continuation criterion in Lemma \ref{lem:local}
 	follow from the standard semilinear Schr\"odinger
 	theory in $H^2$ (see \cite[Sections 2.5 and 4.8]{Cazenave}),
 	together with persistence of polynomial weights
 	on finite time intervals. The energy estimate
 	\eqref{eq:energy-lemma} follows from the standard
 	quadratic product estimate in $H^2$ and
 	Gronwall's inequality.

 	\subsection{The a priori inequality}
 	
 	Following the almost-global bootstrap strategy of
 	Murphy--Pusateri \cite{MP}, we first derive an
 	a priori bound for the free profile; the continuity argument
 	is carried out in the final subsection.  
 	
 	\begin{proposition}\label{prop:apriori}
 		For any solution $u\in C([1,T];\Sigma^2)$ of \eqref{eq:pde}, we have
 		\begin{equation}\label{eq:apriori}
 			\begin{aligned}
 				A(t)\le{}&A(1)+C(|\lambda_1|+|\lambda_2|)
 				\big[A(1)^2+(1+\log t)A(t)^2\big]\\
 				&+C(|\lambda_1|+|\lambda_2|)
 				\int_1^t\frac{A(s)^2}{s}\,ds\\
 				&+C(|\lambda_1|+|\lambda_2|)^2
 				\int_1^t\frac{1+\log s}{s}A(s)^3\,ds,
 				\qquad 1\le t\le T.
 			\end{aligned}
 		\end{equation}
 		Consequently,
 		\begin{equation}\label{eq:apriori-sup}
 			\begin{aligned}
 				X_T\le A(1)&+C(|\lambda_1|+|\lambda_2|)
 				(1+\log T)X_T^2 +C(|\lambda_1|+|\lambda_2|)^2
 				(1+\log T)^2X_T^3.
 			\end{aligned}
 		\end{equation}
 		The constant $C$ is independent of $T$ and the solution.
 	\end{proposition}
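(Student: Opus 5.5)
We take the $\mathcal Z$ norm of both sides of the normal-form identity \eqref{eq:fullNF} at time $t$ and bound each of its six groups of terms separately, using the analytic lemmas of this section. We write $\Lambda=|\lambda_1|+|\lambda_2|$ and use $\|f_\pm(s)\|_{\mathcal Z}=A(s)$ throughout.

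The first term gives $A(1)$. For the low-frequency and space-nonresonant integrals we apply Minkowski's inequality in $s$ and then Lemma~\ref{lem:space} with $g=h=f_\alpha(s)$. This bounds both integrals by $C\Lambda\int_1^t s^{-1}A(s)^2\,ds$. The cutoff-error integral is handled the same way via \eqref{eq:cutoff-bound} and gives the same bound. For the boundary term we apply \eqref{eq:normal-bound} at $s=t$ and at $s=1$, which gives $C\Lambda[(1+\log t)A(t)^2+A(1)^2]$.

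The cubic remainders, where $\partial_s$ falls on a profile, are where the one-sided estimate \eqref{eq:onesided-normal} is needed. We put $g=f_\alpha(s)$ in the $\mathcal Z$ slot and $v=\e^{i\alpha s\Delta}\partial_s f_\alpha(s)$ in the $L^\infty$ slot; for the term with the derivative on the first input we use the version with exchanged inputs. From the profile equation, $\e^{is\Delta}\partial_s f=-i\sum_\beta\lambda_\beta(\e^{i\beta s\Delta}f_\beta)^2$. Conjugation shows that $\e^{-is\Delta}\partial_s f_-=\overline{\e^{is\Delta}\partial_s f}$. Combining this with \eqref{eq:basic-intro} gives
\[\|\e^{i\alpha s\Delta}\partial_s f_\alpha(s)\|_\infty\lesssim\Lambda\, s^{-2}A(s)^2.\]
Inserting this into \eqref{eq:onesided-normal} produces the integrand $s(1+\log s)\cdot s^{-2}\Lambda A(s)^2\cdot A(s)$. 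Multiplied by $|\lambda_\alpha|$, this gives the last term of \eqref{eq:apriori}.

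Collecting all the bounds proves \eqref{eq:apriori}. For \eqref{eq:apriori-sup} we bound $A(s)\le X_T$ in every term, so that $A(1)^2\le X_T^2$, $\int_1^t s^{-1}\,ds=\log t$ and $\int_1^t s^{-1}(1+\log s)\,ds\le(1+\log t)^2$. We then take the supremum over $t\le T$; the lower powers of $(1+\log T)$ are absorbed into the stated ones.

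The main technical obstacle is justifying the application of the lemmas to the actual solution. We must check that $f(s)\in\mathcal Z$, which holds because $f\in C^1([1,T];\Sigma^2)$ by Lemma~\ref{lem:local} and \eqref{eq:Zfrom2}. We also need that the $s$-integrals in \eqref{eq:fullNF} converge as Bochner integrals in $\mathcal Z$, which requires measurability and continuity in $s$ of the integrands; this follows from density of $C_c^\infty$ (Lemma~\ref{lem:basic}) together with the bilinear bounds. Finally, $\partial_s f_\alpha$ must be a legitimate input: $v\in L^\infty$ holds since $u\in H^2\subset L^\infty$. The normal-form identity itself holds pointwise in $\xi$ because the profile is $C^1$ in time with values in $\Sigma^2$, and the Fourier transforms are then continuous.
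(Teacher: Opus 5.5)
Your proposal is correct and follows the paper's proof essentially step for step. You bound $\|\e^{i\alpha s\Delta}\partial_s f_\alpha\|_\infty\lesssim(|\lambda_1|+|\lambda_2|)s^{-2}A(s)^2$ via the profile equation and dispersive decay, and feed this into \eqref{eq:onesided-normal} and its exchanged-input version for the cubic remainders. You treat the other terms of \eqref{eq:fullNF} with Lemma~\ref{lem:space}, \eqref{eq:normal-bound} and \eqref{eq:cutoff-bound}, then take the supremum; your extra remarks on justifying the identity and the $s$-integrals in $\mathcal Z$ go beyond what the paper writes but do not change the argument.
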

 	
 	\begin{proof}
 		We first estimate the differentiated profile.
 		By the profile equation,
 		\[
 		\e^{is\Delta}\partial_s f
 		=-i(\lambda_1u^2+\lambda_2\overline u^{\,2}),
 		\]
 		and the dispersive decay estimate, we have
 		\begin{equation*}
 			\|\e^{i\alpha s\Delta}\partial_sf_\alpha\|_\infty
 			\lesssim (|\lambda_1|+|\lambda_2|)s^{-2}A(s)^2.
 		\end{equation*}
 		Applying \eqref{eq:onesided-normal}
 		and its exchanged-input version, we obtain
 		\begin{equation}\label{eq:cubic}
 			\begin{aligned}
 				&\|\e^{-is\Delta}T_{\chi_{T,\alpha}(s)/(i\Phi_\alpha)}
 				(\e^{i\alpha s\Delta}\partial_sf_\alpha,\e^{i\alpha s\Delta}f_\alpha)\|_{\mathcal Z}\\
 				&\quad+\|\e^{-is\Delta}T_{\chi_{T,\alpha}(s)/(i\Phi_\alpha)}
 				(\e^{i\alpha s\Delta}f_\alpha,\e^{i\alpha s\Delta}\partial_sf_\alpha)\|_{\mathcal Z}\\
 				&\qquad\lesssim (|\lambda_1|+|\lambda_2|)
 				\frac{1+\log s}{s}A(s)^3.
 			\end{aligned}
 		\end{equation}
 		
 		We now estimate the terms in the normal-form
 		identity \eqref{eq:fullNF}.
 		
 		By Lemma~\ref{lem:space}, the first two integrals,
 		corresponding to the low-frequency and
 		space-nonresonant contributions, are bounded by
 		\[
 		C(|\lambda_1|+|\lambda_2|)
 		\int_1^t\frac{A(s)^2}{s}\,ds.
 		\]
 		
 		The third term consists of the normal-form
 		boundary contributions. By
 		\eqref{eq:normal-bound}, their total size is at most
 		\[
 		C(|\lambda_1|+|\lambda_2|)
 		\big[(1+\log t)A(t)^2+A(1)^2\big].
 		\]
 		
 		The fourth term arises from the explicit time
 		derivative of the cutoff. Using
 		\eqref{eq:cutoff-bound}, its contribution is bounded by
 		\[
 		C(|\lambda_1|+|\lambda_2|)
 		\int_1^t\frac{A(s)^2}{s}\,ds.
 		\]
 		
 		Finally, the last two integrals are estimated
 		by \eqref{eq:cubic}. Including their outer
 		coefficients, their total contribution is bounded by
 		\[
 		C(|\lambda_1|+|\lambda_2|)^2
 		\int_1^t\frac{1+\log s}{s}A(s)^3\,ds.
 		\]
 		
 		Combining these estimates proves \eqref{eq:apriori}.
 		Taking the supremum over $1\le t\le T$
 		then yields \eqref{eq:apriori-sup}.
 	\end{proof}
 	
 	\subsection{Proof of Theorem~\ref{thm:main}}
 	
 	In this final subsection, we prove Theorem~\ref{thm:main}
 	by combining Proposition~\ref{prop:apriori}
 	with a standard bootstrap argument.
 	
 	\begin{proof}
 		By Lemma~\ref{lem:local}, a local solution exists
 		and $A(t)$ is continuous. Lemma~\ref{lem:basic}
 		also gives $A(1)\le C_0\varepsilon$.
 		
 		Absorbing the fixed coefficient sizes into
 		a constant $C_\lambda$, assume on an interval
 		of existence that $X_T\le4C_0\varepsilon$.
 		Then \eqref{eq:apriori-sup} implies
 		\begin{equation*}
 			X_T\le C_0\varepsilon
 			+C_\lambda(4C_0)^2\varepsilon^2(1+\log T)
 			+C_\lambda(4C_0)^3\varepsilon^3(1+\log T)^2.
 		\end{equation*}
 		For $T\le\exp(c/\varepsilon)$, we have
 		$\varepsilon(1+\log T)\le\varepsilon+c$.
 		Choose $c>0$ and then $\varepsilon_0>0$
 		sufficiently small so that
 		\[
 		C_\lambda(4C_0)^2(\varepsilon_0+c)
 		+C_\lambda(4C_0)^3(\varepsilon_0+c)^2\le C_0.
 		\]
 		Consequently, the bootstrap bound improves to
 		\begin{equation}\label{eq:bootstrap-improved}
 			X_T\le2C_0\varepsilon.
 		\end{equation}
 		
 		A continuity argument gives the improved bound on
 		every compact subinterval of the lifespan that lies
 		within the target time interval.
 		Combining the resulting dispersive decay with
 		\eqref{eq:energy-lemma}, we obtain
 		\begin{equation}\label{eq:H2-final}
 			\|u(t)\|_{H^2}\le\varepsilon
 			\exp(C_\lambda\varepsilon\log t)
 			\le C_\lambda\varepsilon
 		 \quad\text{for all}\quad 1\le t<\min\{T_{\max},e^{c/\varepsilon}\}.
 		\end{equation}
 		The continuation criterion in
 		Lemma~\ref{lem:local} therefore excludes
 		finite-time breakdown at or before $e^{c/\varepsilon}$.
 		Finally, \eqref{eq:basic-intro},
 		\eqref{eq:bootstrap-improved}, and
 		\eqref{eq:H2-final} give \eqref{eq:main}.
 		When both coefficients vanish, the conclusion
 		follows directly from the free evolution.
 	\end{proof}

 	It remains to prove the estimates stated in
 	Lemmas~\ref{lem:space} and~\ref{lem:normal}.
 	We prove Lemma~\ref{lem:normal} in
 	Subsection~\ref{sec:NFbounds}, using the one-sided
 	weighted multiplier estimate established in
 	Subsection~\ref{subsec:onesided}, and prove
 	Lemma~\ref{lem:space} in Section~\ref{sec:spaceproof}.

 	\section{One-sided multiplier estimates and bounds for the time normal form}
 	\label{sec:weightedNF}

 	We first establish a one-sided weighted multiplier
 	estimate and then apply it to the time normal form.
 	\subsection{The one-sided weighted estimate}\label{subsec:onesided}
 	Following the Fourier multiplier approach in
 	\cite{GMS2D,MP}, we combine bilinear multiplier
 	estimates with spatial annular localization.
 	The resulting estimate requires the $\mathcal Z$
 	norm on only one input, while the other is
 	controlled in $L^\infty$.
 	We begin with a single frequency block.

 	\begin{lemma}[One-sided estimate on a frequency block]
 		\label{lem:onesided}
 		Let $s\ge1$, $N>0$, and suppose that $sN^2\ge c_*>0$.
 		Let $m_N(\xi,\eta)$ be a smooth symbol supported in
 		\[
 		|\xi-\eta|+|\eta|\le C_*N
 		\]
 		and satisfying
 		\begin{equation}\label{eq:onesided-symbol}
 			|\partial_{\xi,\eta}^{\beta}m_N(\xi,\eta)|
 			\le C_\beta N^{-2-|\beta|}
 		\end{equation}
 		for all multiindices $\beta$ up to a sufficiently large
 		fixed order. Then, for $\alpha=\pm1$,
 		\begin{equation}\label{eq:onesided}
 			\|\e^{-is\Delta}T_{m_N}(\e^{i\alpha s\Delta}g,v)\|_{\mathcal Z}
 			\lesssim
 			s\min(1,N^{-1})\|g\|_{\mathcal Z}\|v\|_\infty.
 		\end{equation}
 		The implicit constant may depend on $c_*$, $C_*$,
 		and the symbol bounds, but is independent of
 		$s,N,g$, and $v$.
 	\end{lemma}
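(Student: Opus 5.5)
We prove \eqref{eq:onesided} atom by atom and recover the $\mathcal Z$ norm of each output from two moments via Lemma~\ref{lem:moment}. Write $g=\sum_j g_j$ with $g_j=\rho_jg$, and set
\[
w_j=\e^{-is\Delta}T_{m_N}(\e^{i\alpha s\Delta}g_j,v).
\]
It suffices to show that
\[
\|w_j\|_{\mathcal Z}\lesssim s\min(1,N^{-1})\,2^j\|g_j\|_2\|v\|_\infty
\]
uniformly in $j$, and then to sum in $j$. Since $m_N$ is supported where $|\xi-\eta|\le C_*N$, we may insert $P_{\le C_*N}$ on the first input, i.e.\ replace $g_j$ by $P_{\le C_*N}g_j$. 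This is where the low-frequency gain \eqref{eq:atom-low} enters.

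\emph{Step 1 ($L^2$ bound).} The operator $\e^{\pm is\Delta}$ is unitary, and Lemma~\ref{lem:compact} applies to $m_N$ with $M=N^{-2}$. Combined with \eqref{eq:atom-low} for $\beta=0$, this gives
\[
\|w_j\|_2\lesssim N^{-2}\min(1,2^jN)\|g_j\|_2\|v\|_\infty .
\]

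\emph{Step 2 (second moment).} On the Fourier side,
\[
\widehat{w_j}(\xi)=\frac1{2\pi}\int \e^{is(|\xi|^2-\alpha|\xi-\eta|^2)}m_N(\xi,\eta)\,\widehat{g_j}(\xi-\eta)\,\widehat v(\eta)\,d\eta ,
\]
and $|x|^2w_j$ corresponds to $-\Delta_\xi\widehat{w_j}$. We keep $\eta$ as the frequency variable of $v$, so that no $\xi$-derivative ever falls on $\widehat v$. Each $\xi$-derivative then falls on one of three factors.
\begin{itemize}
\item On the phase, it produces the factor $2is(\xi-\alpha(\xi-\eta))$. This is a smooth symbol of size $sN$ on the support, with derivatives costing $N^{-1}$ each.
\item On $m_N$, it costs $N^{-1}$.
\item On $\widehat{g_j}$, it produces $\widehat{xg_j}$ (or $\widehat{x^\beta g_j}$ for two derivatives).
\end{itemize}
The second derivative of the phase contributes a factor $s$, and $s\lesssim (sN)^2$ because $sN^2\ge c_*$; likewise $N^{-1}\lesssim sN$. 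Every resulting term has the form $\e^{-is\Delta}T_{b}(\e^{i\alpha s\Delta}P_{\le C_*N}(x^\beta g_j),v)$ with $|\beta|\le2$. Here $b$ satisfies the hypotheses of Lemma~\ref{lem:compact} with $M\lesssim (sN)^{2-|\beta|}N^{-2}$. Applying that lemma together with \eqref{eq:atom-low} gives
\[
\||x|^2w_j\|_2\lesssim (sN+2^j)^2N^{-2}\min(1,2^jN)\|g_j\|_2\|v\|_\infty .
\]

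\emph{Step 3 (choice of $L$ and summation).} Apply Lemma~\ref{lem:moment} with $L=sN+2^j\ge1$. This yields
\[
\|w_j\|_{\mathcal Z}\lesssim (sN+2^j)N^{-2}\min(1,2^jN)\|g_j\|_2\|v\|_\infty .
\]
We bound the two contributions to the prefactor separately.
\begin{itemize}
\item The $sN$ part is $sN^{-1}\min(1,2^jN)=s\min(N^{-1},2^j)$. This is at most $s\,2^j\min(1,N^{-1})$ in both cases $N\ge1$ and $N<1$.
\item The $2^j$ part is at most $2^jN^{-2}\min(1,2^jN)$. For $N\ge1$ we use $N^{-2}\le sN^{-1}$. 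For $N<1$ we use $N^{-2}\le s/c_*$.
\end{itemize}
Either way, the prefactor is $\lesssim s\min(1,N^{-1})\,2^j$. Summing over $j$ gives \eqref{eq:onesided}, since $\sum_j2^j\|g_j\|_2=\|g\|_{\mathcal Z}$.

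The main obstacle is Step 2. One must check that, after the derivatives fall on the oscillatory factor, the resulting symbols still obey the scaled bounds required by Lemma~\ref{lem:compact}; this holds because the phase factors are absorbed into the unitary groups and only polynomial factors of size $sN$ remain. One must also track the low-frequency gain \eqref{eq:atom-low} consistently for all moment orders $|\beta|\le2$. The assumption $sN^2\ge c_*$ is exactly what lets all the losses be dominated by $(sN+2^j)^{|\beta|}$.
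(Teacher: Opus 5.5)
Your proposal is correct and follows the paper's proof essentially line by line: it reduces to spatial atoms, obtains the $L^2$ bound from Lemma~\ref{lem:compact} and \eqref{eq:atom-low}, and controls the second moment through the differentiated symbols with $M\lesssim (sN)^{2-|\beta|}N^{-2}$, then applies Lemma~\ref{lem:moment} with $L=sN+2^j$, which is comparable to the paper's $R=\max(2^j,sN)$. The differences are cosmetic: you split the prefactor $sN+2^j$ into its two summands instead of treating the cases $2^j\le sN$ and $2^j>sN$ separately, and you omit the paper's remark that $v$ may be taken smooth by regularization.
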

 	
 	\begin{proof}
 		We prove the estimate by first localizing $g$
 		to a spatial annulus. For a fixed spatial atom
 		$g_j=\rho_jg$, set
 		\begin{equation*}		R=\max(2^j,sN),\qquad
 			w=\e^{-is\Delta}T_{m_N}(\e^{i\alpha s\Delta}g_j,v).
 		\end{equation*}
 		The scale $R$ accounts for both the spatial
 		localization of $g_j$ and the oscillations
 		generated by the Schr\"odinger evolution.
 		
 		We begin with the $L^2$ estimate.
 		Since the support of $m_N$ restricts the first input
 		frequency to $O(N)$, we may insert a smooth projection
 		$P_{\le CN}$ into the first input. Applying
 		Lemma~\ref{lem:compact}, the unitarity of $\e^{it\Delta}$
 		on $L^2$, and \eqref{eq:atom-low}, we obtain
 		\begin{align}		\|w\|_2
 			&\lesssim
 			N^{-2}\|P_{\le CN}\e^{i\alpha s\Delta}g_j\|_2
 			\|v\|_\infty \notag\\
 			&\lesssim
 			N^{-2}\min(2^{-j},N)\,
 			2^j\|g_j\|_2\|v\|_\infty.
 			\label{eq:one-L2}
 		\end{align}
 		
 		To recover the $\mathcal Z$ norm through
 		Lemma~\ref{lem:moment}, we also need an estimate
 		for the second spatial moment of $w$.
 		By Plancherel's theorem, it suffices to control
 		the second derivatives of $\widehat w$ with
 		respect to the output frequency.

 		We may assume that $v$ is smooth, since the general
 		case follows by a standard regularization argument.
 		The Fourier representation of $w$ is
 		\begin{equation}\label{eq:one-Fourier}
 			\widehat w(\xi)
 			=\frac1{2\pi}\int_{\mathbb R^2}
 			e^{is(|\xi|^2-\alpha|\xi-\eta|^2)}
 			m_N(\xi,\eta)
 			\widehat g_j(\xi-\eta)\widehat v(\eta)\,d\eta.
 		\end{equation}
 		
 		To organize the output derivatives, for
 		$\nu=1,2$ introduce the differentiated symbols
 		\begin{align*}		b_{\nu,N}
 			&=\partial_{\xi_\nu}m_N
 			+2is\big[\xi_\nu-\alpha(\xi_\nu-\eta_\nu)\big]m_N,
 			\\
 			c_{\nu,N}
 			&=\partial_{\xi_\nu}b_{\nu,N}
 			+2is\big[\xi_\nu-\alpha(\xi_\nu-\eta_\nu)\big]b_{\nu,N}.
 		\end{align*}
 		Differentiating \eqref{eq:one-Fourier} twice with respect
 		to $\xi_\nu$, while keeping $\eta$ fixed, gives
 		\begin{equation}\label{eq:one-second}
 			\begin{aligned}			\partial_{\xi_\nu}^2\widehat w(\xi)
 				=\frac1{2\pi}\int_{\mathbb R^2}
 				&e^{is(|\xi|^2-\alpha|\xi-\eta|^2)}
 				\Big[
 				m_N\partial_{\xi_\nu}^2\widehat g_j(\xi-\eta)\\
 				&\quad
 				+2b_{\nu,N}\partial_{\xi_\nu}
 				\widehat g_j(\xi-\eta)
 				+c_{\nu,N}\widehat g_j(\xi-\eta)
 				\Big]\widehat v(\eta)\,d\eta.
 			\end{aligned}
 		\end{equation}

 		We next verify that the differentiated symbols
 		satisfy the bounds required by Lemma~\ref{lem:compact}.
 		Since $sN^2\ge c_*$, we have
 		\[
 		N^{-1}\lesssim sN\le R,
 		\qquad
 		s\lesssim(sN)^2\le R^2,
 		\]
 		where the implicit constants may depend on $c_*$.
 		Consequently, \eqref{eq:onesided-symbol} and
 		Leibniz's rule yield
 		\begin{align*}		|\partial_{\xi,\eta}^{\beta}b_{\nu,N}|
 			&\lesssim_\beta
 			(N^{-1}+sN)N^{-2-|\beta|}
 			\lesssim_\beta RN^{-2-|\beta|},
 			\\
 			|\partial_{\xi,\eta}^{\beta}c_{\nu,N}|
 			&\lesssim_\beta
 			(N^{-2}+s+s^2N^2)N^{-2-|\beta|}
 			\lesssim_\beta R^2N^{-2-|\beta|}.
 		\end{align*}
 		
 		These bounds allow us to apply
 		Lemma~\ref{lem:compact} separately to the
 		three terms in \eqref{eq:one-second}.
 		As before, the support conditions permit
 		the insertion of a smooth projection
 		$P_{\le CN}$ into the first input.
 		By Plancherel's theorem,
 		\begin{align*}		\|\partial_{\xi_\nu}^2\widehat w\|_2
 			\lesssim N^{-2}\|v\|_\infty
 			\Big(
 			&\|P_{\le CN}(x_\nu^2g_j)\|_2
 			+R\|P_{\le CN}(x_\nu g_j)\|_2
 			+R^2\|P_{\le CN}g_j\|_2
 			\Big).
 		\end{align*}
 		Applying the atomic estimate \eqref{eq:atom-low},
 		we further obtain
 		\[
 		\begin{aligned}		\|\partial_{\xi_\nu}^2\widehat w\|_2
 			\lesssim{}&
 			N^{-2}\big(2^{2j}+R2^j+R^2\big)
 			2^j\min(2^{-j},N)
 			\|g_j\|_2\|v\|_\infty.
 		\end{aligned}
 		\]
 		Since $2^j\le R$, the expression in parentheses
 		is at most $3R^2$. Summing over $\nu=1,2$
 		and using Plancherel's theorem gives
 		\begin{equation}\label{eq:one-moment}
 			\||x|^2w\|_2
 			\lesssim
 			R^2N^{-2}\min(2^{-j},N)
 			2^j\|g_j\|_2\|v\|_\infty.
 		\end{equation}
 		
 		We now combine the $L^2$ and second-moment
 		estimates to recover the annular norm.
 		By \eqref{eq:one-L2}, \eqref{eq:one-moment},
 		and Lemma~\ref{lem:moment} with spatial scale $R$,
 		we obtain
 		\begin{equation}\label{eq:one-before-algebra}
 			\|w\|_{\mathcal Z}
 			\lesssim
 			RN^{-2}\min(2^{-j},N)
 			2^j\|g_j\|_2\|v\|_\infty.
 		\end{equation}
 		
 		We claim that
 		\begin{equation}\label{eq:one-algebra}
 			RN^{-2}\min(2^{-j},N)
 			\lesssim s\min(1,N^{-1}).
 		\end{equation}
 		Recall that $R=\max(2^j,sN)$.
 		We distinguish two cases.
 		
 		If $2^j\le sN$, then $R=sN$. Since
 		$j\ge0$, we have $2^{-j}\le1$, and hence
 		\[
 		\begin{aligned}		RN^{-2}\min(2^{-j},N)
 			&\le sN^{-1}\min(1,N)
 			=s\min(1,N^{-1}).
 		\end{aligned}
 		\]
 		
 		If instead $2^j>sN$, then $R=2^j$.
 		Using $sN^2\ge c_*$ and $s\ge1$, we obtain
 		\[
 		RN^{-2}\min(2^{-j},N)
 		\le N^{-2}
 		\lesssim s\min(1,N^{-1}).
 		\]
 		Thus \eqref{eq:one-algebra} holds in both cases.
 		
 		Substituting \eqref{eq:one-algebra} into
 		\eqref{eq:one-before-algebra}, we obtain
 		\[
 		\|\e^{-is\Delta}T_{m_N}
 		(\e^{i\alpha s\Delta}g_j,v)\|_{\mathcal Z}
 		\lesssim
 		s\min(1,N^{-1})2^j\|g_j\|_2\|v\|_\infty.
 		\]
 		Finally, summing over $j\ge0$ and recalling
 		\[
 		\|g\|_{\mathcal Z}
 		=\sum_{j\ge0}2^j\|g_j\|_2,
 		\]
 		we arrive at \eqref{eq:onesided}.
 	\end{proof}

 	\subsection{Bounds for the time normal form}\label{sec:NFbounds}
 	
 	We now prove Lemma~\ref{lem:normal}.
 	The main ingredient is the one-sided weighted multiplier
 	estimate from Subsection~\ref{subsec:onesided},
 	together with the symbol bounds in
 	Lemma~\ref{lem:normal-symbols}. 
 	
 	\begin{proof}[Proof of Lemma~\ref{lem:normal}]
 		We begin with the one-sided estimate
 		\eqref{eq:onesided-normal}.
 		By Lemma~\ref{lem:normal-symbols}, the dyadic
 		normal-form symbols $m_{\alpha,s,N}$ defined in
 		\eqref{eq:dyadicm} satisfy $sN^2\ge1/4$
 		and the hypotheses of \eqref{eq:onesided}.
 		Thus, for $g\in\mathcal Z$ and $v\in L^\infty$,
 		\[
 		\|\e^{-is\Delta}T_{m_{\alpha,s,N}}(\e^{i\alpha s\Delta}g,v)\|_{\mathcal Z}
 		\lesssim s\min(1,N^{-1})\|g\|_{\mathcal Z}\|v\|_\infty.
 		\]
 		
 		To sum these estimates, we use the lower bound
 		on the admissible frequency scales.
 		Since $N\in2^{\mathbb Z}$, the dyadic sum satisfies
 		\begin{equation*}		\begin{aligned}			\sum_{N\ge(2\sqrt s)^{-1}}\min(1,N^{-1})
 				&=\sum_{(2\sqrt s)^{-1}\le N\le1}1+\sum_{N>1}N^{-1}\\
 				&\lesssim1+\log s.
 			\end{aligned}
 		\end{equation*}
 		Summing the frequency-localized estimates
 		therefore proves \eqref{eq:onesided-normal}.
 		
 		We next establish the quadratic normal-form
 		bound \eqref{eq:normal-bound}.
 		Choosing $v=\e^{i\alpha s\Delta}h$ and using
 		$\|\e^{i\alpha s\Delta}h\|_\infty
 		\lesssim s^{-1}\|h\|_{\mathcal Z}$,
 		we obtain \eqref{eq:normal-bound}.
 		
 		It remains to control the term in which the
 		time derivative falls on the cutoff.
 		By \eqref{eq:normal-symbol-time},
 		the differentiated symbol gains an additional
 		factor $s^{-1}$ and is supported where
 		$N\approx s^{-1/2}$.
 		Hence only $O(1)$ dyadic frequency blocks
 		contribute. Applying the same one-sided block
 		estimate and the dispersive decay bound gives
 		\[
 		\|\e^{-is\Delta}T_{\partial_s\chi_{T,\alpha}(s)/(i\Phi_\alpha)}
 		(\e^{i\alpha s\Delta}g,\e^{i\alpha s\Delta}h)\|_{\mathcal Z}
 		\lesssim s^{-1}\|g\|_{\mathcal Z}\|h\|_{\mathcal Z}.
 		\]
 		This establishes \eqref{eq:cutoff-bound}
 		and completes the proof.
 	\end{proof}

 	\section{Low frequencies and the space-nonresonant term}\label{sec:spaceproof}
 	
 	In this section, we prove Lemma~\ref{lem:space} by estimating the
 	low-frequency and space-nonresonant contributions separately.
 	The argument is carried out at a fixed time $s\ge1$.
 	
 	We first reduce the problem to spatial atoms. Fix
 	$g_j=\rho_jg$ and $h_k=\rho_kh$. Since the multipliers under
 	consideration are symmetric in the two input frequencies, we may
 	assume $j\ge k$. For this pair of atoms, set
 	\begin{equation}\label{eq:space-RK}
 		R=\max\{2^j,2^k,\sqrt s\}=\max\{2^j,\sqrt s\},
 		\qquad K=\frac{128R}{s}.
 	\end{equation}
 	Here $R$ is the relevant spatial scale, while $K$ separates
 	moderate from high joint frequencies. We write $N\in2^{\mathbb Z}$
 	for a dyadic joint-frequency scale, corresponding to
 	\[
 	\frac N2<\sqrt{|\xi-\eta|^2+|\eta|^2}<2N.
 	\]
 	Thus ``high frequency'' means high relative to $K$, not
 	necessarily $N\ge1$. The definitions give
 	\begin{equation}\label{eq:space-basic-scales}
 		R\ge1,\qquad R\ge\sqrt s,\qquad
 		K\ge128s^{-1/2},\qquad sK=128R.
 	\end{equation}
 	
 	It suffices to establish, for each fixed pair of spatial atoms,
 	\begin{equation}\label{eq:space-atom-goal}
 		\|\e^{-is\Delta}T_{\chi_S}
 		(\e^{is\Delta}g_j,\e^{is\Delta}h_k)\|_{\mathcal Z}
 		\lesssim\frac{2^{j+k}}s\norm{g_j}_2\norm{h_k}_2,
 	\end{equation}
 	and, for each $\alpha=\pm1$,
 	\begin{equation}\label{eq:res-final}
 		\|\e^{-is\Delta}T_{\chi_{\mathrm{lo}}}
 		(\e^{i\alpha s\Delta}g_j,\e^{i\alpha s\Delta}h_k)\|_{\mathcal Z}
 		\lesssim\frac{2^{j+k}}s\norm{g_j}_2\norm{h_k}_2.
 	\end{equation}
 	The low-frequency estimate \eqref{eq:res-final} is proved in
 	Subsection~\ref{subsec:space-low}. To establish
 	\eqref{eq:space-atom-goal}, we treat moderate and high
 	frequencies in Subsections~\ref{subsec:space-moderate}
 	and \ref{subsec:space-high}, respectively. We then sum
 	the resulting estimates over the spatial atoms to conclude
 	Lemma~\ref{lem:space}. All implicit constants are independent
 	of $s,j,k$, and the dyadic scale $N$.

 	\subsection{The low-frequency contribution}\label{subsec:space-low}
 	
 	We begin with the low-frequency estimate \eqref{eq:res-final}.
 	Fix $\alpha\in\{+1,-1\}$ and set
 	\[
 	w=\e^{-is\Delta}T_{\chi_{\mathrm{lo}}}(\e^{i\alpha s\Delta}g_j,\e^{i\alpha s\Delta}h_k).
 	\]
 	Its Fourier transform is
 	\[
 	\widehat w(\xi)=\frac1{2\pi}\int_{\mathbb R^2}
 	\e^{is\Phi_\alpha(\xi,\eta)}
 	\chi_{\mathrm{lo}}(s,\xi,\eta)
 	\widehat g_j(\xi-\eta)\widehat h_k(\eta)\dd\eta.
 	\]
 	Since $\Phi_\alpha$ is homogeneous of degree two,
 	\begin{equation*}
 		\begin{aligned}
 			\e^{is\Phi_\alpha(\xi,\eta)}\chi_{\mathrm{lo}}(s,\xi,\eta)
 			={}&\e^{i\Phi_\alpha(\sqrt s\,\xi,\sqrt s\,\eta)}
 			\chi\big(|\sqrt s(\xi-\eta)|^2+|\sqrt s\,\eta|^2\big).
 		\end{aligned}
 	\end{equation*}
 	The right-hand side is a fixed smooth, compactly
 	supported function of the scaled variables.
 	Consequently, for every multiindex $\beta$,
 	\begin{equation*}
 		\left|\partial_\xi^\beta
 		\big(\e^{is\Phi_\alpha}\chi_{\mathrm{lo}}\big)\right|
 		\lesssim_\beta s^{|\beta|/2},
 		\quad\text{and}\quad |\xi-\eta|,|\eta|\lesssim s^{-1/2}
 		\quad\text{on its support}.
 	\end{equation*}
 	Differentiating the Fourier representation with
 	respect to $\xi$, while keeping $\eta$ fixed,
 	and applying Young's convolution inequality
 	$L^2*L^1\to L^2$, we obtain, for $|\beta|\le2$,
 	\begin{equation}\label{eq:res-young}
 		\begin{aligned}
 			\norm{\partial_\xi^\beta\widehat w}_2
 			\lesssim\sum_{\gamma\le\beta}
 			&s^{(|\beta|-|\gamma|)/2}
 			\norm{\one_{|\xi|\le2s^{-1/2}}
 				\partial_\xi^\gamma\widehat g_j}_2
 			\norm{\one_{|\xi|\le2s^{-1/2}}\widehat h_k}_1.
 		\end{aligned}
 	\end{equation}
 	
 	We first estimate the last factor using the area
 	of the two-dimensional frequency ball:
 	\begin{equation}\label{eq:res-second}
 		\norm{\one_{|\xi|\le2s^{-1/2}}\widehat h_k}_1
 		\lesssim s^{-1}\norm{\widehat h_k}_\infty
 		\lesssim s^{-1}2^k\norm{h_k}_2.
 	\end{equation}
 	For the first factor, Plancherel's theorem gives
 	\[
 	\norm{\one_{|\xi|\le2s^{-1/2}}
 		\partial_\xi^\gamma\widehat g_j}_2
 	\le\norm{x^\gamma g_j}_2
 	\lesssim2^{j|\gamma|}\norm{g_j}_2,
 	\]
 	while the Fourier $L^\infty$ estimate, together
 	with the square root of the ball's area, yields
 	\[
 	\norm{\one_{|\xi|\le2s^{-1/2}}
 		\partial_\xi^\gamma\widehat g_j}_2
 	\lesssim s^{-1/2}\norm{x^\gamma g_j}_1
 	\lesssim s^{-1/2}2^{j(|\gamma|+1)}\norm{g_j}_2.
 	\]
 	Taking the minimum of these two bounds, as in
 	\eqref{eq:atom-low}, we obtain
 	\begin{equation}\label{eq:res-first}
 		\norm{\one_{|\xi|\le2s^{-1/2}}
 			\partial_\xi^\gamma\widehat g_j}_2
 		\lesssim
 		2^{j(|\gamma|+1)}\min(2^{-j},s^{-1/2})\norm{g_j}_2.
 	\end{equation}
 	
 	Substituting \eqref{eq:res-second} and
 	\eqref{eq:res-first} into \eqref{eq:res-young},
 	we arrive at
 	\begin{equation}\label{eq:res-substitute}
 		\begin{aligned}
 			\norm{\partial_\xi^\beta\widehat w}_2
 			\lesssim{}&\frac{2^{j+k}}s
 			\min(2^{-j},s^{-1/2})\norm{g_j}_2\norm{h_k}_2
 			\sum_{\gamma\le\beta}
 			s^{(|\beta|-|\gamma|)/2}2^{j|\gamma|}.
 		\end{aligned}
 	\end{equation}
 	By the definition of $R$ in \eqref{eq:space-RK},
 	\begin{equation*}		\min(2^{-j},s^{-1/2})=R^{-1},\qquad
 		s^{(|\beta|-|\gamma|)/2}2^{j|\gamma|}
 		=(\sqrt s)^{|\beta|-|\gamma|}(2^j)^{|\gamma|}
 		\le R^{|\beta|}.
 	\end{equation*}
 	Since $|\beta|\le2$, the sum in
 	\eqref{eq:res-substitute} has only finitely many terms. It follows that
 	\begin{equation}\label{eq:res-moments}
 		\norm{\partial_\xi^\beta\widehat w}_2
 		\lesssim R^{|\beta|-1}\frac{2^{j+k}}s
 		\norm{g_j}_2\norm{h_k}_2,
 		\qquad |\beta|\le2.
 	\end{equation}
 	Finally, applying Lemma~\ref{lem:moment} with
 	$L=R$ to the zeroth-order and pure second-order
 	estimates in \eqref{eq:res-moments} proves
 	\eqref{eq:res-final}.
 	
 	\subsection{Moderate frequencies}\label{subsec:space-moderate}
 	In this subsection, we estimate the moderate-frequency part
 	of \eqref{eq:space-atom-goal} by applying the classical
 	Coifman--Meyer theorem in the form recalled in
 	Subsection~\ref{subsec:multiplier-bounds}.
 	Let $0\le\zeta\le1$ be a fixed smooth cutoff satisfying
 	$\zeta=1$ on $[0,1]$ and $\zeta=0$ on $[2,\infty)$.
 	We split the space-nonresonant symbol using the cutoffs
 	\[
 	\zeta\!\left(\frac{\sqrt{|\xi-\eta|^2+|\eta|^2}}{K}\right)
 	\quad\text{and}\quad
 	1-\zeta\!\left(\frac{\sqrt{|\xi-\eta|^2+|\eta|^2}}{K}\right).
 	\]
 	For the moderate-frequency contribution, set
 	\begin{equation*}
 		\begin{aligned}
 			m(\xi,\eta)
 			&=\chi_S(s,\xi,\eta)
 			\zeta\!\left(\frac{\sqrt{|\xi-\eta|^2+|\eta|^2}}{K}\right),\\
 			w&=\e^{-is\Delta}T_m(\e^{is\Delta}g_j,\e^{is\Delta}h_k).
 		\end{aligned}
 	\end{equation*}
 	For convenience, throughout this subsection write
 	\[
 	r=\sqrt{|\xi-\eta|^2+|\eta|^2}.
 	\]
 	By the support properties of the cutoffs, $m$ and all its
 	derivatives are supported in
 	\begin{equation}\label{eq:moderate-band}
 		s^{-1/2}\le r\le2K.
 	\end{equation}
 	
 	We first establish uniform derivative bounds for the symbol $m$.
 	Recall that $\vartheta$ is the smooth cutoff fixed in
 	Subsection~\ref{subsec:partition}, equal to $1$ on $[0,1/16]$
 	and vanishing on $[1/8,\infty)$.
 	The angular factor
 	$1-\vartheta(|\xi-2\eta|^2/(|\xi-\eta|^2+|\eta|^2))$
 	is smooth and homogeneous of degree zero away from the origin. Moreover, for
 	$|\beta|\ge1$, derivatives of the lower and upper radial
 	cutoffs are supported where $r\approx s^{-1/2}$ and
 	$r\approx K$, respectively. Consequently,
 	\[
 	\begin{aligned}		\left|\partial_{\xi,\eta}^\beta\left[1-\vartheta\!\left(\frac{|\xi-2\eta|^2}{|\xi-\eta|^2+|\eta|^2}\right)\right]\right|
 		&\lesssim_\beta r^{-|\beta|}, &&r>0,\\
 		|\partial_{\xi,\eta}^\beta[1-\chi(sr^2)]|
 		&\lesssim_\beta r^{-|\beta|}, &&r\ge s^{-1/2},\\
 		|\partial_{\xi,\eta}^\beta\zeta(r/K)|
 		&\lesssim_\beta r^{-|\beta|}, &&0<r\le2K.
 	\end{aligned}
 	\]
 	The corresponding zeroth-order bounds are immediate.
 	Applying Leibniz's rule, we therefore obtain
 	\begin{equation}\label{eq:moderate-symbol}
 		|\partial_{\xi,\eta}^\beta m(\xi,\eta)|
 		\lesssim_\beta r^{-|\beta|}
 		\qquad (r>0).
 	\end{equation}
 	In particular, on a joint-frequency shell $r\approx N$,
 	we have $|\partial_{\xi,\eta}^\beta m|
 	\lesssim_\beta N^{-|\beta|}$.
 	
 	Next, we estimate the symbols arising from differentiation
 	with respect to the output frequency. For $\nu=1,2$,
 	introduce the auxiliary symbols
 	\begin{equation*}		b_\nu=\partial_{\xi_\nu}m+2is\eta_\nu m,
 		\qquad
 		c_\nu=\partial_{\xi_\nu}b_\nu+2is\eta_\nu b_\nu.
 	\end{equation*}
 	Differentiating the Fourier representation twice with
 	respect to $\xi_\nu$, while keeping $\eta$ fixed, yields
 	\begin{equation}\label{eq:moderate-second}
 		\begin{aligned}
 			\partial_{\xi_\nu}^2\widehat w(\xi)
 			=\frac1{2\pi}\int_{\R^2}e^{2is(\xi-\eta)\cdot\eta}
 			\big[&m\,\partial_{\xi_\nu}^2\widehat g_j(\xi-\eta)
 			+2b_\nu\,\partial_{\xi_\nu}\widehat g_j(\xi-\eta)\\
 			&+c_\nu\,\widehat g_j(\xi-\eta)\big]
 			\widehat h_k(\eta)\dd\eta.
 		\end{aligned}
 	\end{equation}
 	
 	To apply the Coifman--Meyer estimate to these terms,
 	we need uniform bounds for the normalized symbols
 	$m$, $R^{-1}b_\nu$, and $R^{-2}c_\nu$.
 	By \eqref{eq:moderate-symbol}, the linear dependence
 	on $\eta_\nu$, and Leibniz's rule, we have, for every
 	further multiindex $\beta$,
 	\begin{equation}\label{eq:polynomial-symbol}
 		\begin{aligned}
 			|\partial_{\xi,\eta}^\beta b_\nu|
 			&\lesssim_\beta (r^{-1}+sr)r^{-|\beta|},\\
 			|\partial_{\xi,\eta}^\beta c_\nu|
 			&\lesssim_\beta (r^{-2}+s+s^2r^2)r^{-|\beta|}.
 		\end{aligned}
 	\end{equation}
 	Recall that $R=\max\{2^j,\sqrt s\}$ and
 	$K=128R/s$. On the support specified in
 	\eqref{eq:moderate-band}, these definitions imply
 	\begin{equation*}
 		\frac1{Rr}\le1,\qquad
 		\frac{sr}{R}\le256,\qquad
 		\frac{s}{R^2}\le1.
 	\end{equation*}
 	Combining these inequalities with
 	\eqref{eq:moderate-symbol} and
 	\eqref{eq:polynomial-symbol}, we obtain
 	\begin{equation*}		|\partial_{\xi,\eta}^\beta m|
 		+|\partial_{\xi,\eta}^\beta(R^{-1}b_\nu)|
 		+|\partial_{\xi,\eta}^\beta(R^{-2}c_\nu)|
 		\lesssim_\beta r^{-|\beta|}.
 	\end{equation*}
 	Thus the three normalized symbols satisfy uniform
 	Coifman--Meyer bounds, including all the required
 	higher-order derivative bounds. The corresponding
 	operator norms are therefore independent of $s,j$, and $k$.
 	
 	Finally, we estimate the $L^2$ norm and the second
 	spatial moment of $w$. Lemma~\ref{lem:moment} will then
 	yield the required $\mathcal Z$ bound.
 	The support of these symbols restricts the first input
 	frequency to $\lesssim K$. We may therefore insert a
 	smooth projection $P_{\le CK}$ into that input after
 	performing the output differentiations.
 	Applying \eqref{eq:CM}, \eqref{eq:atom-low},
 	and \eqref{eq:atom-disp}, we first obtain
 	\begin{equation}\label{eq:moderate-L2}
 		\begin{aligned}
 			\norm w_2
 			&\lesssim\norm{P_{\le CK}g_j}_2\norm{\e^{is\Delta}h_k}_\infty\\
 			&\lesssim\frac{2^{j+k}}s\min(2^{-j},K)
 			\norm{g_j}_2\norm{h_k}_2.
 		\end{aligned}
 	\end{equation}
 	For the second derivative in \eqref{eq:moderate-second},
 	we apply the same multiplier estimate to
 	$m$, $R^{-1}b_\nu$, and $R^{-2}c_\nu$.
 	Plancherel's theorem and the localized atomic bounds
 	then give
 	\begin{equation*}
 		\begin{aligned}
 			\norm{\partial_{\xi_\nu}^2\widehat w}_2
 			\lesssim{}&\bigl[
 			\norm{P_{\le CK}(x_\nu^2g_j)}_2
 			+R\norm{P_{\le CK}(x_\nu g_j)}_2
 			+R^2\norm{P_{\le CK}g_j}_2\bigr]
 			\norm{\e^{is\Delta}h_k}_\infty\\
 			\lesssim{}&\frac{2^{j+k}}s
 			(2^{2j}+R2^j+R^2)\min(2^{-j},K)
 			\norm{g_j}_2\norm{h_k}_2.
 		\end{aligned}
 	\end{equation*}
 	Since $2^j\le R$, the expression in parentheses
 	is bounded by $3R^2$. Summing over $\nu=1,2$
 	and using Plancherel's theorem, we obtain
 	\begin{equation*}
 		\norm{|x|^2w}_2
 		\lesssim R^2\frac{2^{j+k}}s\min(2^{-j},K)
 		\norm{g_j}_2\norm{h_k}_2.
 	\end{equation*}
 	Applying Lemma~\ref{lem:moment} with $L=R$
 	and using \eqref{eq:moderate-L2}, we conclude that
 	\[
 	\norm w_{\Z}\lesssim
 	\frac{R2^{j+k}}s\min(2^{-j},K)
 	\norm{g_j}_2\norm{h_k}_2.
 	\]
 	
 	It remains to control the factor
 	$R\min(2^{-j},K)$. If $R=2^j$, then
 	$R\min(2^{-j},K)\le1$. If $R=\sqrt s$,
 	then $K=128/R$, and hence
 	$R\min(2^{-j},K)\le128$. Thus this factor
 	is uniformly bounded, and we obtain
 	\begin{equation}\label{eq:moderate-final}
 		\norm w_{\Z}\lesssim\frac{2^{j+k}}s\norm{g_j}_2\norm{h_k}_2.
 	\end{equation}

 	\subsection{High frequencies}\label{subsec:space-high}
 	Following the space-nonresonant integration-by-parts
 	method of Germain--Masmoudi--Shatah \cite{GMS2D},
 	we prove the high-frequency part of
 	\eqref{eq:space-atom-goal} using the complete oscillatory
 	phase, which also incorporates the locations of the
 	spatial atoms. We decompose the remaining symbol with
 	the dyadic partition introduced in Section~\ref{sec:normal}:
 	\begin{equation*}
 		\begin{aligned}
 			m_N(\xi,\eta)=\chi_S(s,\xi,\eta)
 			&\left[1-\zeta\!\left(
 			\frac{\sqrt{|\xi-\eta|^2+|\eta|^2}}K\right)\right] \varphi\!\left(
 			\frac{\sqrt{|\xi-\eta|^2+|\eta|^2}}N\right).
 		\end{aligned}
 	\end{equation*}
 	For each fixed pair of spatial atoms, set
 	\[
 	w_N=\e^{-is\Delta}T_{m_N}(\e^{is\Delta}g_j,\e^{is\Delta}h_k).
 	\]
 	Throughout the frequency differentiations, the parameters
 	$s,j,k$, and hence $R$ and $K$, are kept fixed.
 	Only dyadic blocks with $N\ge K/2$ contribute,
 	and the symbol is supported where the joint frequency
 	is at least $K$.
 	Expanding the Fourier transforms of the spatial atoms,
 	we obtain
 	\begin{equation*}
 		\widehat w_N(\xi)=\frac1{(2\pi)^3}\iint
 		\mathscr K_{s,N}(\xi;y,z)g_j(y)h_k(z)\dd y\dd z,
 	\end{equation*}
 	where
 	\begin{align}
 		\mathscr K_{s,N}(\xi;y,z)
 		&=\int\e^{i\Theta(\xi,\eta;y,z)}m_N(\xi,\eta)\dd\eta,
 		\label{eq:high-kernel}\\
 		\Theta(\xi,\eta;y,z)
 		&=2s(\xi-\eta)\cdot\eta-y\cdot(\xi-\eta)-z\cdot\eta.
 		\notag
 	\end{align}
 	
 	By the support properties of the spatial atoms
 	and the definition $K=128R/s$, we have
 	\[
 	|y|+|z|\le4R=\frac{sK}{32}
 	\le\frac{s}{32}\sqrt{|\xi-\eta|^2+|\eta|^2}
 	\]
 	on the support of $m_N$. Combining this with
 	\eqref{eq:spacegap}, we obtain
 	\begin{equation}\label{eq:full-gap}
 		\begin{aligned}
 			|\nabla_\eta\Theta|
 			&=|2s(\xi-2\eta)+y-z|\\
 			&\ge\left(\frac12-\frac1{32}\right)
 			s\sqrt{|\xi-\eta|^2+|\eta|^2}\\
 			&\ge\frac{15}{64}sN.
 		\end{aligned}
 	\end{equation}
 	Thus the complete phase, which incorporates the
 	spatial locations of both atoms, is uniformly
 	nonstationary on the support of $m_N$.
 	
 	We next establish uniform nonstationary-phase estimates,
 	including derivatives with respect to the output frequency.
 	We rescale the frequency variables by setting
 	$\xi=N\widetilde\xi$ and $\eta=N\widetilde\eta$.
 	Then $\Theta=sN^2\widetilde\Theta$, where
 	\begin{equation*}		\widetilde\Theta
 		=2(\widetilde\xi-\widetilde\eta)\cdot\widetilde\eta
 		-\frac y{sN}\cdot(\widetilde\xi-\widetilde\eta)
 		-\frac z{sN}\cdot\widetilde\eta.
 	\end{equation*}
 	Defining the rescaled amplitude by
 	$a_{s,N}(\widetilde\xi,\widetilde\eta)
 	=m_N(N\widetilde\xi,N\widetilde\eta)$,
 	we can rewrite the kernel in \eqref{eq:high-kernel} as
 	\begin{equation*}		\mathscr K_{s,N}(N\widetilde\xi;y,z)
 		=N^2\int_{\mathbb R^2}
 		\e^{isN^2\widetilde\Theta}
 		a_{s,N}(\widetilde\xi,\widetilde\eta)
 		\dd\widetilde\eta.
 	\end{equation*}
 	
 	Since $N\ge K/2$ and $sK=128R$, we have
 	$sN\ge64R$. Together with $R\ge\sqrt s$
 	and \eqref{eq:full-gap}, this yields
 	\begin{equation}\label{eq:high-parameters}
 		sN^2\ge\frac{128^2}{4},\qquad
 		\frac{|y|+|z|}{sN}\le\frac1{16},\qquad
 		|\nabla_{\widetilde\eta}\widetilde\Theta|
 		\ge\frac{15}{64}.
 	\end{equation}
 	
 	The dyadic cutoff restricts the rescaled amplitude to
 	a fixed compact set away from the joint origin. The
 	low-frequency cutoff is constant there, and derivatives
 	of the moderate-to-high cutoff are nonzero only when
 	the dyadic and transition scales are comparable. Hence
 	the rescaled amplitude satisfies
 	\begin{equation}\label{eq:rescaled-amplitude}
 		\left|
 		\partial_{\widetilde\xi,\widetilde\eta}^{\beta}
 		a_{s,N}(\widetilde\xi,\widetilde\eta)
 		\right|
 		\le C_\beta
 	\end{equation}
 	for every multiindex $\beta$, where $C_\beta$
 	is independent of $s,j,k$, and $N$.
 	
 	We next differentiate the kernel with respect
 	to the output frequency before performing
 	integration by parts. Since the normalized
 	phase is affine in $\widetilde\xi$, we have
 	\[
 	\nabla_{\widetilde\xi}\widetilde\Theta
 	=2\widetilde\eta-\frac y{sN}.
 	\]
 	Thus, for $|\beta|\le2$, the chain rule and
 	Leibniz's formula give
 	\begin{equation}\label{eq:output-expansion}
 		\begin{aligned}
 			\partial_\xi^\beta
 			\mathscr K_{s,N}(\xi;y,z)
 			&=N^{-|\beta|}
 			\left.
 			\partial_{\widetilde\xi}^\beta
 			\mathscr K_{s,N}
 			(N\widetilde\xi;y,z)
 			\right|_{\widetilde\xi=\xi/N}\\
 			&=N^{2-|\beta|}
 			\sum_{\gamma\le\beta}
 			\binom{\beta}{\gamma}
 			(isN^2)^{|\gamma|}
 			\int_{\mathbb R^2}
 			\e^{isN^2\widetilde\Theta}
 			\left(2\widetilde\eta-\frac y{sN}\right)^\gamma
 			\partial_{\widetilde\xi}^{\beta-\gamma}
 			a_{s,N}(\widetilde\xi,\widetilde\eta)
 			\dd\widetilde\eta,
 		\end{aligned}
 	\end{equation}
 	where $\widetilde\xi=\xi/N$ in the integral
 	on the right-hand side.
 	
 	On a neighborhood of the amplitude support,
 	introduce the differential operator
 	\[
 	\mathcal L=
 	\frac{\nabla_{\widetilde\eta}\widetilde\Theta}
 	{i sN^2|\nabla_{\widetilde\eta}\widetilde\Theta|^2}
 	\cdot\nabla_{\widetilde\eta},
 	\qquad
 	\mathcal L\e^{isN^2\widetilde\Theta}
 	=\e^{isN^2\widetilde\Theta}.
 	\]
 	By the support properties of $m_N$ and
 	\eqref{eq:high-parameters}, we have
 	$|2\widetilde\eta-y/(sN)|\lesssim1$.
 	Moreover, after factoring out $(sN^2)^{-1}$,
 	the coefficients of $\mathcal L$ and their
 	derivatives of any fixed order are uniformly
 	bounded on the amplitude support.
 	
 	Applying integration by parts $M$ times
 	to each integral in \eqref{eq:output-expansion},
 	and using \eqref{eq:rescaled-amplitude}
 	together with $sN^2\ge1$, we obtain
 	\begin{equation}\label{eq:kernel-bound}
 		\begin{aligned}
 			|\partial_\xi^\beta\mathscr K_{s,N}(\xi;y,z)|
 			&\lesssim_{M,\beta}
 			N^{2-|\beta|}(sN^2)^{|\beta|-M}\\
 			&=N^2(sN)^{|\beta|}(sN^2)^{-M},
 			\qquad |\beta|\le2.
 		\end{aligned}
 	\end{equation}
 	
 	Finally, we use the kernel bound to estimate the
 	annular norm of each dyadic output and sum over $N$.
 	The joint-frequency localization restricts the output to $|\xi|\lesssim N$,
 	so the square root of its frequency-support area contributes a factor $N$.
 	By \eqref{eq:kernel-bound} and Minkowski's integral inequality,
 	\begin{equation*}
 		\begin{aligned}
 			\norm{\partial_\xi^\beta\widehat w_N}_2
 			&\le C\iint|g_j(y)||h_k(z)|
 			\norm{\partial_\xi^\beta\mathscr K_{s,N}(\cdot;y,z)}_2\dd y\dd z\\
 			&\lesssim_{M,\beta}
 			2^{j+k}\norm{g_j}_2\norm{h_k}_2
 			N^3(sN)^{|\beta|}(sN^2)^{-M}.
 		\end{aligned}
 	\end{equation*}
 	
 	Since $sN\ge64R\ge1$, we apply
 	Lemma~\ref{lem:moment} with $L=sN$ to obtain
 	\begin{equation*}
 		\begin{aligned}
 			\norm{w_N}_{\Z}
 			&\lesssim_M2^{j+k}\norm{g_j}_2\norm{h_k}_2
 			sN^4(sN^2)^{-M}\\
 			&=\frac{2^{j+k}}s\norm{g_j}_2\norm{h_k}_2
 			(sN^2)^{2-M}.
 		\end{aligned}
 	\end{equation*}
 	
 	Choosing $M=4$, we sum the dyadic estimates
 	using a geometric series:
 	\begin{equation*}
 		\sum_{N\ge K/2}(sN^2)^{-2}
 		\lesssim(sK^2)^{-2}\lesssim1,
 	\end{equation*}
 	where the last inequality follows from
 	\eqref{eq:space-basic-scales}, since
 	$sK^2=128^2R^2/s\ge128^2$.
 	Consequently,
 	\begin{equation}\label{eq:high-final}
 		\sum_{N\ge K/2}\norm{w_N}_{\Z}
 		\lesssim\frac{2^{j+k}}s\norm{g_j}_2\norm{h_k}_2.
 	\end{equation}

 	Combining \eqref{eq:moderate-final} and
 	\eqref{eq:high-final} yields
 	\eqref{eq:space-atom-goal}. Together with
 	\eqref{eq:res-final}, we obtain the required
 	estimates for each pair of spatial atoms.
 	
 	Finally, summing over $j,k\ge0$ gives
 	\[
 	\sum_{j,k\ge0}
 	\frac{2^{j+k}}s\|g_j\|_2\|h_k\|_2
 	=\frac1s\|g\|_{\mathcal Z}\|h\|_{\mathcal Z}.
 	\]
 	The estimates above give absolute convergence of the
 	spatial and high-frequency sums in $\mathcal Z$.
 	Truncation and density therefore extend the argument
 	from finite decompositions to general inputs. This proves
 	Lemma~\ref{lem:space}.

 \appendix
 
 \section{Addition of a gauge-invariant quadratic term}
 \label{app:gauge}
 
 In this appendix, we prove the extension stated
 in Remark~\ref{rem:gauge}. Consider
 \begin{equation}\label{eq:gauge-equation}
 	(i\partial_t+\Delta)u
 	=\lambda_1u^2+\lambda_2\overline u^{\,2}
 	+\lambda_3|u|u,
 	\qquad \lambda_3\in\mathbb C,
 \end{equation}
 with the same initial data as in
 Theorem~\ref{thm:main}.
 The additional gauge-invariant term can be
 estimated directly in the profile space $\mathcal Z$.
 
 We first establish the following estimate.
 
 \begin{lemma}\label{lem:gauge-profile}
 	For every $s\ge1$ and $g\in\mathcal Z$,
 	\begin{equation}\label{eq:gauge-Z}
 		\left\|
 		e^{-is\Delta}
 		\bigl(|e^{is\Delta}g|e^{is\Delta}g\bigr)
 		\right\|_{\mathcal Z}
 		\lesssim
 		\|e^{is\Delta}g\|_\infty
 		\|g\|_{\mathcal Z}
 		\lesssim
 		s^{-1}\|g\|_{\mathcal Z}^2.
 	\end{equation}
 \end{lemma}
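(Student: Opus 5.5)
The plan is to transfer the whole estimate to the Fourier side of a pseudo-conformal factorization. There, multiplication by the unimodular chirp is harmless for the annular norm, and the claim reduces to a Besov product estimate for the map $F(z)=|z|z$. The second inequality in \eqref{eq:gauge-Z} is immediate from \eqref{eq:basic-intro}, since $\|e^{is\Delta}g\|_\infty\lesssim s^{-1}\|g\|_{\mathcal Z}$. So only the first inequality needs proof.

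\textbf{Step 1 (factorization).} Write $M_s(x)=e^{i|x|^2/(4s)}$. From the kernel formula for $e^{is\Delta}$ in Subsection~\ref{subsec:notation},
\[
e^{is\Delta}g(x)=\frac{c}{s}\,M_s(x)\,\varphi\!\left(\frac{x}{2s}\right),
\qquad \varphi=\widehat{M_s g},
\]
for an absolute constant $c$ with $|c|\approx1$. Set $u=e^{is\Delta}g$. Then $\|u\|_\infty\approx s^{-1}\|\varphi\|_\infty$. Moreover $|u|u=\frac{c|c|}{s^2}M_s\,(|\varphi|\varphi)(\cdot/2s)$, because the chirp has modulus one and $F(e^{i\theta}z)=e^{i\theta}F(z)$.

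Applying the same identity backwards, i.e.\ $e^{-is\Delta}$ of a function of the form $M_s\,\psi(\cdot/2s)$, I expect
\[
e^{-is\Delta}(|u|u)=\frac{c'}{s}\,\overline{M_s}\,\mathcal F^{-1}\big(|\varphi|\varphi\big),
\qquad
g=\overline{M_s}\,\mathcal F^{-1}\varphi .
\]
I will track the constants only up to absolute factors; homogeneity forces the power $s^{-1}$.

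\textbf{Step 2 (removing the chirp).} Multiplication by $\overline{M_s}$ commutes with each $\rho_j$ and preserves $L^2$ norms. Hence $\|\overline{M_s}h\|_{\mathcal Z}=\|h\|_{\mathcal Z}$. Together with $\|\mathcal F^{-1}\psi\|_{\mathcal Z}\approx\|\psi\|_{B^1_{2,1}}$ from Subsection~\ref{subsec:profile-space}, the claim becomes
\[
\big\||\varphi|\varphi\big\|_{B^1_{2,1}}\lesssim\|\varphi\|_\infty\|\varphi\|_{B^1_{2,1}}.
\]
Note that $\|\varphi\|_\infty\lesssim\|\varphi\|_{B^1_{2,1}}<\infty$ in two dimensions.

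\textbf{Step 3 (the nonlinear Besov estimate — main obstacle).} The index $1$ means that first differences are not enough. I will use the second-difference characterization
\[
\|\psi\|_{B^1_{2,1}}\approx\|\psi\|_2+\int_{\mathbb R^2}\|\Delta_h^2\psi\|_2\,\frac{dh}{|h|^{3}}.
\]
The $L^2$ part is trivial: $\||\varphi|\varphi\|_2\le\|\varphi\|_\infty\|\varphi\|_2$.

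Since $F\in C^{1,1}$ with $|\nabla F(z)|\lesssim|z|$ and $|\nabla^2F|\lesssim1$, a Taylor expansion gives the pointwise bound
\[
|\Delta_h^2F(\varphi)|\lesssim\|\varphi\|_\infty|\Delta_h^2\varphi|
+|\Delta_h\varphi|^2+|\Delta_{-h}\varphi|^2 .
\]
The first term contributes $\|\varphi\|_\infty\|\varphi\|_{B^1_{2,1}}$ directly. The quadratic term contributes
\[
\int\|\Delta_h\varphi\|_4^2\,\frac{dh}{|h|^3}\approx\|\varphi\|_{\dot B^{1/2}_{4,2}}^2 .
\]
I will close this with the interpolation inequality $\|\varphi\|_{\dot B^{1/2}_{4,2}}^2\lesssim\|\varphi\|_\infty\|\varphi\|_{\dot B^{1}_{2,1}}$. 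I would prove it by Littlewood--Paley: use $\|P_N\varphi\|_4^2\le\|P_N\varphi\|_\infty\|P_N\varphi\|_2$ and $\|P_N\varphi\|_\infty\lesssim\|\varphi\|_\infty$, so $\sum_N N\|P_N\varphi\|_4^2\lesssim\|\varphi\|_\infty\sum_N N\|P_N\varphi\|_2$.

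This step is where care is needed. The difference and Littlewood--Paley characterizations must be reconciled, and the pointwise expansion must be justified for the non-smooth $F$. Density of $C_c^\infty$ in $\mathcal Z$ (Lemma~\ref{lem:basic}) lets me work with smooth $\varphi$ and pass to the limit.

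Combining the three steps, and using $\|\varphi\|_\infty\approx s\|u\|_\infty$ against the prefactor $s^{-1}$, gives the first inequality in \eqref{eq:gauge-Z}.
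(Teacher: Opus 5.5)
Your proposal is correct and follows the paper's route: the pseudo-conformal factorization $e^{is\Delta}g=\frac{1}{2is}M_s\,\widehat{M_sg}(\cdot/2s)$, gauge covariance of $|z|z$, removal of the unimodular chirp, and the equivalence $\|w\|_{\mathcal Z}\approx\|\widehat w\|_{B^1_{2,1}}$ reduce everything to $\||\varphi|\varphi\|_{B^1_{2,1}}\lesssim\|\varphi\|_\infty\|\varphi\|_{B^1_{2,1}}$. The paper only asserts this composition estimate, whereas your Step~3 actually proves it, via second differences, the $C^{1,1}$ Taylor bound and the interpolation $\|\varphi\|_{\dot B^{1/2}_{4,2}}^2\lesssim\|\varphi\|_\infty\|\varphi\|_{\dot B^1_{2,1}}$; the pointwise Taylor bound holds for any bounded $\varphi$, so no density argument is needed there.
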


\begin{proof}
	For fixed $s\ge1$, let
	\[
	h(x)=e^{i|x|^2/(4s)}g(x).
	\]
	The explicit formula for the free Schr\"odinger
	group gives
	\[
	e^{is\Delta}g(x)
	=\frac1{2is}e^{i|x|^2/(4s)}
	\widehat h\left(\frac{x}{2s}\right).
	\]
	Moreover, by the gauge covariance of $z\mapsto |z|z$,
	we obtain
	\[
	e^{-is\Delta}
	\bigl(|e^{is\Delta}g|e^{is\Delta}g\bigr)
	=
	\frac1{2s}e^{-i|x|^2/(4s)}
	\mathcal F^{-1}\bigl(|\widehat h|\widehat h\bigr).
	\]
	
 Thus the equivalence
	$\|w\|_{\mathcal Z}
	\approx\|\widehat w\|_{B^1_{2,1}}$
 impies
	\[
	\begin{aligned}
		\left\|
		e^{-is\Delta}
		\bigl(|e^{is\Delta}g|e^{is\Delta}g\bigr)
		\right\|_{\mathcal Z}	&\lesssim
		s^{-1}\||\widehat h|\widehat h\|_{B^1_{2,1}}\lesssim
		s^{-1}\|\widehat h\|_\infty
		\|\widehat h\|_{B^1_{2,1}}\lesssim
		\|e^{is\Delta}g\|_\infty
		\|g\|_{\mathcal Z}.
	\end{aligned}
	\]
	The remaining bound in \eqref{eq:gauge-Z}
	follows from \eqref{eq:basic-intro}.
\end{proof}

 We now prove the extension of
 Theorem~\ref{thm:main}.
 
 \begin{proof}[Proof of the extension]
 	We retain the notation $f(t)$, $A(t)$, and $X_T$
 	from Section~\ref{sec:reduction}, and set
 	\[
 	\Lambda=|\lambda_1|+|\lambda_2|+|\lambda_3|.
 	\]
 	
 	We apply the normal-form decomposition
 	of Section~\ref{sec:normal} only to
 	$u^2$ and $\overline u^{\,2}$.
 	The additional term remains in the Duhamel
 	integral and contributes
 	\[
 	-i\lambda_3\int_1^t
 	e^{-is\Delta}\bigl(|u(s)|u(s)\bigr)\,ds.
 	\]
 	By Lemma~\ref{lem:gauge-profile},
 	\begin{equation}\label{eq:gauge-duhamel}
 		\begin{aligned}
 			&\left\|
 			\lambda_3\int_1^t
 			e^{-is\Delta}\bigl(|u(s)|u(s)\bigr)\,ds
 			\right\|_{\mathcal Z}\lesssim
 			|\lambda_3|\int_1^t\frac{A(s)^2}{s}\,ds.
 		\end{aligned}
 	\end{equation}
 	
 	For the cubic normal-form remainders,
 	the modified profile equation gives
 	\[
 	\|e^{i\alpha s\Delta}\partial_sf_\alpha\|_\infty
 	\lesssim
 	\Lambda s^{-2}A(s)^2,
 	\qquad \alpha=\pm1.
 	\]
 	The one-sided estimate
 	\eqref{eq:onesided-normal}
 	therefore applies as before, with constants
 	depending also on $\lambda_3$.
 	
 	Combining these bounds with the estimates
 	in Section~\ref{sec:reduction}, we obtain
 	\begin{equation}\label{eq:gauge-apriori}
 		X_T\le C_0\varepsilon
 		+C_\Lambda(1+\log T)X_T^2
 		+C_\Lambda(1+\log T)^2X_T^3.
 	\end{equation}
 	The standard $H^2$ energy and weighted
 	persistence estimates also extend to
 	$|u|u$, using its $C^{1,1}$ regularity.
 	Consequently, the bootstrap and continuation
 	arguments in Section~\ref{sec:reduction}
 	apply with modified constants and yield
 	all the conclusions of Theorem~\ref{thm:main}
 	for \eqref{eq:gauge-equation}.
 \end{proof}

 	\Needspace{8\baselineskip}
 	\bigskip
 	\noindent\textbf{Acknowledgements:} X.~Liu was supported by NSFC Grant 12501314.

 \bigskip
 \noindent\textbf{Declaration of AI use.}
 ChatGPT was used for research-related discussions
 and to assist with language editing, manuscript
 organization, LaTeX preparation, bibliographic
 verification, and consistency checks.
The author critically reviewed and revised
all content and takes full responsibility for the manuscript.

 	\bigskip
 	\noindent\textsc{School of Mathematics, Hangzhou Normal University,
 		Hangzhou 311121, China}
 	
 	\noindent\textit{Email address:} \texttt{liuxuan95@hznu.edu.cn}.
 \end{document}